\newcommand{\vi}{\varphi}
\newcommand{\R}{\mathbb R}
\newcommand{\N}{\mathbb N}
\newcommand{\E}{\mathbb E}
\newcommand{\Zd}{\mathbb Z^d}
\newcommand{\pee}{\ensuremath{\mathbb{P}}}
\def\1{{\mathchoice {\rm 1\mskip-4mu l} {\rm 1\mskip-4mu l}
{\rm 1\mskip-4.5mu l} {\rm 1\mskip-5mu l}}}
\newtheorem{theorem}{{\small T}{\scriptsize HEOREM}}[section]
\newtheorem{corollary}{{\bf{\small C}{\scriptsize OROLLARY}}}[section]
\newtheorem{proposition}{{\bf{\small P}{\scriptsize ROPOSITION}}}[section]
\newtheorem{lemma}{{\bf{\small L}{\scriptsize EMMA}}}[section]
\newtheorem{remark}{{\bf{\small R}{\scriptsize EMARK}}}[section]
\newtheorem{definition}{{\bf{\small D}{\scriptsize EFINITION}}}[section]
\renewenvironment{proof}[1]
{\noindent{{\bf{\small{ P}{\scriptsize ROOF}}}.}\hspace{0.1cm} #1} {$\;\qed$\newline}
\newcommand{\beq}{\begin{eqnarray}}
\newcommand{\eeq}{\end{eqnarray}}
\newcommand{\ba}{\begin{align*}}
\newcommand{\ea}{\end{align*}}
\newcommand{\be}{\begin{equation}}
\newcommand{\ee}{\end{equation}}
\newcommand{\bl}{\begin{lemma}}
\newcommand{\el}{\end{lemma}}
\newcommand{\br}{\begin{remark}}
\newcommand{\er}{\end{remark}}
\newcommand{\bt}{\begin{theorem}}
\newcommand{\et}{\end{theorem}}
\newcommand{\bd}{\begin{definition}}
\newcommand{\ed}{\end{definition}}
\newcommand{\bp}{\begin{proposition}}
\newcommand{\ep}{\end{proposition}}
\newcommand{\bc}{\begin{corollary}}
\newcommand{\ec}{\end{corollary}}
\newcommand{\bpr}{\begin{proof}}
\newcommand{\epr}{\end{proof}}
\newcommand{\bi}{\begin{itemize}}
\newcommand{\ei}{\end{itemize}}
\newcommand{\ben}{\begin{enumerate}}
\newcommand{\een}{\end{enumerate}}
\newcommand{\caD}{{\mathcal D}}
\newcommand{\caF}{{\mathcal F}}
\newcommand{\caG}{{\mathcal G}}
\newcommand{\caL}{{\mathscr L}}
\newcommand{\caM}{{\mathcal M}}
\newcommand{\caS}{{\mathscr S}}
\newcommand{\BEP}{\text{\normalfont BEP}}
\newcommand{\IRW}{\text{\normalfont IRW}}
\newcommand{\SIP}{\text{\normalfont SIP}}
\newcommand{\SEP}{\text{\normalfont SEP}}
\newcommand{\eft}{\text{\tiny left}}
\newcommand{\ight}{\text{\tiny right}}
\newcommand{\seppar}{\gamma}
\newcommand{\HT}{\text{\normalfont [HT]}}
\newcommand{\Db}{\text{\normalfont [MD]}}
\newcommand{\BHT}{\text{\normalfont[BHT]}}
\newcommand{\UD}{\text{\normalfont[UD]}}
\newcommand{\const}{c}
\newcommand*\pFq[6][8]{%
  \begingroup 
  \pFqmuskip=#1mu\relax
  \mathcode`\,=\string"8000
  \begingroup\lccode`\~=`\,
  \lowercase{\endgroup\let~}\pFqcomma
  {}_{#2}F_{#3}{\left[\genfrac..{0pt}{}{#4}{#5};#6\right]}%
  \endgroup
}
\newcommand{\pFqcomma}{\mskip\pFqmuskip}
\begin{document}
\title
{{\bf Factorized duality,
		stationary product measures and generating functions
}}
\author{Frank Redig
and Federico Sau
\\
\small{Delft Institute of Applied Mathematics}\\
\small{Delft University of Technology}\\
{\small Mekelweg 4, 2628 CD Delft}
\\
\small{The Netherlands}
}
\maketitle
\begin{abstract}

We  find all factorized duality functions for a class of interacting particle systems. The functions we recover are self-duality functions for interacting particle systems such as zero-range processes, symmetric inclusion and exclusion processes, as well as duality and self-duality functions for their continuous counterparts.
The approach is based on, firstly, a general relation between factorized duality functions and stationary product measures and, secondly,  an intertwining relation provided by generating functions.

 For the interacting particle systems,  these self-duality and duality functions turn out to be generalizations of those previously obtained in \cite{gkrv} and, more recently, in \cite{chiara}. Thus, we discover that only these two families of  dualities cover all possible cases. Moreover, the same method discloses all self-duality functions  for interacting diffusion systems such as the Brownian energy process, where both the process and its dual are in continuous variables.
\end{abstract}
\section{Introduction}
Duality and self-duality are very useful and powerful tools that allow to analyze properties of a complicated system in terms
of a simpler one. In case of self-duality for  particle systems, the dual system is the same and the simplification arises because in the dual
one considers only a finite number of  particles (e.g.\ \cite{demasi}).\\
Several methods are available to construct dual processes and duality relations. In the context of population dynamics,
the starting point is always to consider as dual the backward coalescent process (for a general overview, see \cite{dawson}). In the context of particle systems, the algebraic method
developed in \cite{gkrv} offers a framework to construct duality functions via symmetries of the generator and reversible measures.\\
However, a complete picture of how to obtain \emph{all} duality relations is missing.
In this latter context of interacting particle systems, natural questions of the same sort are:
which particle systems allow self-duality and is it possible to obtain  all factorized duality functions for these systems?\\
One of the useful applications of disposing of all duality functions is that, depending on the target, one can choose
appropriate ones: e.g.\ in the hydrodynamic limit and the study of the structure of the stationary measures, the ``classical'' duality functions are the appropriate ones (see e.g.\ \cite{demasi}), whereas in the study of (stationary and non-stationary) fluctuation fields and associated Boltzmann-Gibbs principles (\cite{landim}, chapter 11), as well as in the study of speed of relaxation to equilibrium in $L^2$ or in the study of perturbation theory around models with duality (cf.\ \cite{demasi}),  ``orthogonal'' duality functions turn out to be very useful.

In this paper, we develop an approach to answer the above questions and systematically determine all duality functions and relations for some interacting particle and diffusion systems. In this route,  starting from examples, we first investigate a general connection between stationary product measures and factorized duality functions. This shows, in particular, that for infinite systems  with factorized self-duality functions, the only stationary measures which are ergodic (w.r.t.\ either space-translation or time) are in fact product measures. \\
Then we use this connection between stationary product measures and duality functions to recover all possible factorized duality functions from the stationary product measures. More precisely, we show that, given the first duality function, i.e.\ the duality function with a single dual particle, all other duality functions are determined. This provides a simple machinery to obtain all duality functions in processes such as Symmetric Exclusion Process ($\SEP$), Symmetric Inclusion Process ($\SIP$) and
Independent Random Walkers ($\IRW$). In particular, we recover via this method all orthogonal polynomial duality functions obtained in \cite{chiara}.\\
Moreover, we prove that in the context of conservative particle systems  where the rates for particle hopping  depend only on the number of particles of the departure and arrival sites, the processes $\SEP$, $\SIP$ and $\IRW$ are the only systems which have self-duality with factorized self-duality functions and that
the first duality function is necessarily an affine function of the number of particles.\\
Next, in order to prove that  the ``possible'' duality functions derived via the method described above  are indeed duality functions, we develop a  method based on generating functions. This method, via an intertwining relation, allows to go from discrete systems (particle hopping dynamics) to continuous systems  (such as diffusion processes or deterministic dynamics) and back, and also allows to pass from self-duality to duality and back. The proof of a self-duality relation in a discrete system then reduces to the same property in a continuous system, which is much easier to check directly.\\
The generating function method also provides new examples of self-duality for processes in the continuum such as the Brownian Energy Process ($\BEP$), which intertwines with the $\SIP$ via the generating function. In fact, we show equivalence between self-duality of $\SIP$, duality between $\SIP$ and $\BEP$ and self-duality of $\BEP$.
Finally, this method based on generating functions generalizes the concept of obtaining dualities from symmetries to intertwinings, being a symmetry an intertwining of the generator with itself.

The paper is organized as follows. In Section \ref{section setting} we introduce the basic definitions of duality and systems considered. Additionally, in Theorem \ref{gzrprop} we prove which particle systems out of those considered admit factorized self-duality.\\
In Section \ref{section general relation}, we investigate  a general relation between factorized duality functions and stationary product measures. We treat separately the finite and infinite contexts in which this relation arises; in the latter case, we exploit this connection to draw some conclusions on the product structure of ergodic measures. \\
Section \ref{section general construction} is devoted to the derivation of all possible factorized self-duality and duality functions. Here Theorem \ref{gzrprop} and the relation in the previous section are the two key ingredients.\\
In Section \ref{section generating}, after an introductory example and a brief introduction on the general connection between duality and intertwining relations, we establish an intertwining between the discrete and the continuum processes. This intertwining relation is then used to produce all the self-duality functions for the Brownian Energy Process.

\section{Setting}\label{section setting}
We start defining what we mean by \emph{duality} of stochastic processes. Then, we introduce a general class of Markov interacting particle systems with associated interacting diffusion systems.
\subsection{Duality}\label{section setting duality}
Given two state spaces $\Omega$ and $\hat \Omega$ and \emph{two stochastic processes} $\{\xi(t),\ t\geq 0\}$ and $\{\eta(t),\ t\geq 0\}$ evolving on them, we say that they are \emph{dual} with duality function $D: \hat \Omega \times \Omega \to \R$ (where $D$ is a measurable function)
if, for all $t>0$, $\xi \in \hat \Omega$ and $\eta \in \Omega$,
we have the so-called duality relation
\be\label{dualrel}
\hat \E_\xi D(\xi(t),\eta)= \E_\eta D(\xi,\eta(t))\ .
\ee
If the laws of the two processes coincide, we speak about \emph{self-duality}.\\
More generally, we say that \emph{two semigroups} $\{S(t),\ t \geq 0\}$
and $\{ \hat{S}(t),\ t\geq 0\}$
are dual
with duality function $D$ if, for all $t \geq 0$,
\[
(\hat{S}(t))_{\eft} D= (S(t))_{\ight}D\ ,
\]
where \textquotedblleft left\textquotedblright\ (resp.\ \textquotedblleft right\textquotedblright) refers to action on the left (resp.\ right) variable.
Even more generally, we say that \emph{two operators} $L$ and $\hat{L}$ are dual to each
other
with duality function $D$ if
\be\label{opdual}
(\hat{L})_{\eft} D= (L)_{\ight}D\ .
\ee
In order not to overload notation, we use the expression $A_{\eft} D(\xi,\eta)$ for $(A D(\cdot, \eta))(\xi)$ and,
similarly, $B_{\ight} D(\xi,\eta)= (BD(\xi,\cdot))(\eta)$. We will often write $D(\xi,\eta)$
in place of $(\xi,\eta)\mapsto D(\xi,\eta)$.


\subsection{The lattice and the factorization over sites}\label{section setting lattice} The underlying geometry of all systems that we will look at consists of a  set of sites $V$ either finite or $V=\Zd$. Moreover we are given a family of transition rates $p: V \times V \to \R_+$, satisfying the following conditions: for all $x$, $y \in V$,
\begin{enumerate}
	\item[(1)] \emph{Vanishing diagonal}:  $ p(x,x)=0\ ,$
	\item[(2)] \emph{Irreducibility}:  there exist $x_1=x$, $x_2$,  $\ldots$, $x_m=y$ such that $\prod_{l=1}^{m-1} p(x_l, x_{l+1} ) > 0\ .$
\end{enumerate}
In case of infinite $V$, we further require the following:
\begin{enumerate}
	\item[(3)] \emph{Finite-range}: there exists $R> 0$ such that, for all $x,y \in V$, $p(x,y)=0$ if $|x-y|>R\ ,$
	\item[(4)] \emph{Uniform bound on total jump rate}: $\sup_{x \in V} \sum_{y \in V} p(x,y) < \infty\ .$
\end{enumerate}
Notice that when $p$ is finite-range and translation invariant, then the uniform bound on total jump rate follows automatically.

To each site $x \in V$ we associate a variable $\eta_x \in E= \N$, $\{0,\ldots,N\}$ or $\R_+$, with the interpretation of either the number of particles or the amount of energy associated to the site $x$. Configurations are denoted by $\eta \in \Omega=E^V$.

\

In all the examples that we will be discussing here, the duality functions \emph{factorize over sites}, i.e.
\be\label{factoo}
D(\xi,\eta)= \prod_{x\in V} d(\xi_x,\eta_x)\ ,\quad \eta \in E^V,\quad \xi \in \hat E^V\ .
\ee
We then call the functions $d(\xi_x,\eta_x)$ the \emph{singe-site duality functions} and further assume
\beq\label{Da}
 d(0,\cdot)\equiv 1\ .
\eeq
The above condition \eqref{Da}  is related to the fact that we want to have duality functions which make sense for infinite systems when the dual configuration has a finite total mass. A typical example is when $\eta$, $\xi \in \N^V$, where $\eta$ is an infinite configuration while $\xi$ is a finite configuration, so that in the product \eqref{factoo} there are  only a finite number of factors different from $d(0, \eta_x)$.  In this sense, the choice $d(0,\cdot)\equiv 1$ is the only sensible one for infinite systems.\\
When $V$ is finite and $E = \N$ or $\{0,\ldots,N \}$,  this condition is not necessary and e.g.\  if a  reversible product measure $\mu = \otimes_{x \in V}\ \nu$ exists, then the so-called \emph{cheap self-duality function}
$D(\xi,\eta)=\tfrac{1}{\mu(\xi)} \1{\{\xi=\eta\}} = \prod_{x \in V} \frac{1}{\nu(\xi_x)} \1\{\xi_x = \eta_x \}$ does \emph{not} satisfy \eqref{Da}.

\subsection{Interacting particle systems with factorized self-duality} \label{section setting particle}

The  class of interacting particle systems we consider is described by the following infinitesimal generator acting on local functions $f : \Omega \to \R$ as follows:
\beq \label{gzrpgen}
L f(\eta) &=& \sum_{x,y \in V} p(x,y) L_{x,y} f(\eta)\ ,\quad \eta \in \Omega\ ,
\eeq
where $L_{x,y}$, the \emph{single-edge generator}, is defined as
\beq \label{single-edge L}
L_{x,y}f(\eta)&=& u(\eta_x) v(\eta_y) (f(\eta^{x,y})-f(\eta)) + u(\eta_y)v(\eta_x)(f(\eta^{y,x})-f(\eta))\ ,\quad \eta \in \Omega\ ,
\eeq
and $\eta^{x, y}$ denotes the configuration arising from $\eta$ by removing one particle at $x$ and putting it at $y$, i.e.\  $\eta^{x,y}_x= \eta_x-1$, $\eta^{x,y}_y= \eta_y+1$, while $\eta^{x,y}_z= \eta_z$ if $z \neq x,y$. Note the conservative nature of the system and the form of the particle jump rates in \eqref{single-edge L} which depend on the number of particles in the departure and arrival site in a factorized form. Minimal requirements on the functions $u$ and $v$, namely
\begin{itemize}
	\item[(i)] $u(0)=0$, $u(1)=1$ and $u(n) > 0$ for all $n > 0\ ,$
	\item[(ii)] $v(0)\neq 0$ and $v(N)=0$ if $E=\{0,\ldots,N \}$ and in all other cases $v(n)>0\ $,
\end{itemize}
guarantee the existence of a one-parameter family of \emph{stationary} (actually \emph{reversible}) \emph{product measures} $\{\otimes\ \nu_\lambda,\ \lambda > 0 \}$ with marginals $\nu_\lambda$ given by
\beq \label{marginals}
\nu_\lambda(n) &=&  \varphi(n) \frac{\lambda^n}{n!} \frac{1}{Z_\lambda}\ ,\quad n \in \N\ \text{or}\ \{0,\ldots,N\}\ ,
\eeq
for all $\lambda > 0$ for which the normalizing constant $Z_\lambda < \infty$ and with  $\varphi(n)=n! \prod_{m=1}^{n}\frac{v(m-1)}{u(m)}$.

\subsection{Basic examples}\label{secex}

We recall here the basic examples of self-dual interacting particle systems and corresponding factorized self-duality functions known in literature (cf.\ e.g.\ \cite{gkrv}).

\begin{itemize}
	\item[(I)]\textbf{Independent random walkers ($\IRW$)}
	\begin{itemize}
		\item $E=\N\ ,$
		\item $u(n)=n\ ,$ $v(n)=1\ ,$
		\item $\nu_\lambda \sim \text{Poisson}(\lambda)\ ,$  $\nu_\lambda(n)=\tfrac{\lambda^n}{n!} e^{-\lambda}\ ,\ \lambda > 0\ ,$
		\item $d(k,n)= \frac{n!}{(n-k)!} \1\{k \leq n \}\ .$
	\end{itemize}
	 \item[(II)] \textbf{Symmetric inclusion process ($\SIP(\alpha)$, $\alpha > 0$)}
	 \begin{itemize}
	 	\item $E=\N\ ,$
	 	\item $u(n)=n\ ,$\ $v(n)=\alpha+n\ ,$
	 	\item $\nu_\lambda \sim \text{Gamma}_{\text{d}}(\alpha,\lambda)\ ,$ $\nu_\lambda(n)= \tfrac{\Gamma(\alpha+n)}{\Gamma(\alpha)} \tfrac{\lambda^n}{n!}(1-\lambda)^\alpha\ ,\ \lambda \in (0,1)\ ,$
	 	\item $d(k,n)= \tfrac{\Gamma(\alpha)}{\Gamma(\alpha+k)}\tfrac{n!}{(n-k)!} \1\{k \leq n \}\ .$
	 \end{itemize}
 \item[(III)] \textbf{Symmetric exclusion process ($\SEP(\gamma)$, $\gamma \in \N$)}
 \begin{itemize}
 	\item $E=\{0,\ldots,N \}\ ,$
 	\item $u(n)=n\ ,$\ $v(n)=\gamma-n\ ,$
 	\item $\nu_\lambda \sim \text{Binomial}(\gamma,\tfrac{\lambda}{1+\lambda})$, $\nu_\lambda(n)= \tfrac{\gamma!}{(\gamma-n)!}\tfrac{\lambda^n}{n!} \left(\tfrac{1}{1+\lambda}\right)^\gamma\ ,\ \lambda>0\ ,$
 	\item $d(k,n)= \tfrac{(\gamma-k)!}{\gamma!}\tfrac{n!}{(n-k)!} \1\{k \leq n \}\ .$
 \end{itemize}
\end{itemize}
\label{section factorizable processes}

In the following theorem we show that the only processes with generator of the type \eqref{gzrpgen} which have non-trivial factorized self-duality functions
are of one of the types described in the examples above, i.e.\ $\IRW$, $\SIP$ or $\SEP$. Here by \textquotedblleft non-trivial\textquotedblright\ we mean that the first single-site self-duality function
$d(1,n)$ is not a constant (as a function of $n$).
\bt\label{gzrprop}
Assume that the process with generator \eqref{single-edge L} is self-dual with factorized self-duality function $D(\xi,\eta)=\prod d(\xi_x,\eta_x)$ in the form \eqref{factoo}  with $d(0,\cdot)\equiv 1$ as in \eqref{Da}.
	If   $d(1,n)$ is not constant as a function of $n$,
	then
	\beq\label{didi}
	u(n) &=& n\  \nonumber
	\\
	v(n) &=& v(0)+ (v(1)-v(0))n\ ,
	\eeq
	and the first single-site self-duality function is of the form
	\beq \label{singdual1}
	d(1,n)=a + b n\ ,
	\eeq
	for some $a \in \R$ and $b \neq 0$.
	
\et
\bpr
Using the self-duality relation for $\xi_x = 1$ and no particles elsewhere, together with $u(0)=0$, we obtain the identity
\beq \nonumber
&& u(\eta_x)v(\eta_y) (d(1,\eta_x-1)- d(1,\eta_x)) +  u(\eta_y)v(\eta_x) (d(1,\eta_x+1)-d(1,\eta_x))\\
\label{ideg}
&&=\ p(x,y) u(1)v(0)( d(1,\eta_y)-d(1,\eta_x))
\ .
\eeq
Setting $\eta_x=\eta_y=n \geq 1$, this yields anytime $u(n)v(n) \neq  0$
\beq\label{harmin}
d(1,n+1)+d(1,n-1)-2 d(1,n)=0\ ,
\eeq
from which we derive $d(1,n)= a+ bn$. Because $d(1,n)$ is  not constant as a function of $n$, we must have $b\not=0$. Inserting
$d(1,n)= a+ bn$ in \eqref{ideg} we obtain
\[
u(\eta_x)v(\eta_y)-u(\eta_y)v(\eta_x)= -u(1)v(0)(\eta_y-\eta_x)\ ,
\]
from which, by setting $\eta_x=n$ and  $\eta_y=0$ we obtain the first in \eqref{didi}, while via $\eta_x=n$ and $\eta_y=1$ we get the second condition.
\epr
\begin{remark}
	 More generally, if we replace \eqref{Da} with $d(0,n)\neq 0 $ in the above statement,  we analogously obtain \eqref{didi} and
	\beq\nonumber \label{relation2}
	d(0,n) &=& c^n\\
	d(1,n) &=& (a+bn) \cdot c^n\ ,
	\eeq
	for some constants $a, b, c \in \R$, $b, c \neq 0$.
	
\end{remark}

\subsection{Interacting diffusion systems as scaling limits}\label{section setting diffusion}
Interacting diffusion systems arise as scaling limits of the particle systems in Section \ref{secex} above (cf.\ \cite{gkrv}). More in details, by \textquotedblleft scaling limit\textquotedblright\ we refer to the limit process of the particle systems $\{\tfrac{1}{N}\eta^N(t), \ t \geq 0 \}_{N \in \N}$, where the initial conditions $\tfrac{1}{N}\eta^N(0)= \tfrac{1}{N}(\lfloor z_x N \rfloor )_{x \in V}$ converge to some  $z \in E^V$, with $E = \R_+$.

In case of $\IRW$, one obtains a deterministic (hence degenerate diffusion) process $\{z(t),\ t \geq 0 \}$ whose evolution is described by a first-order differential operator. In case of $\SIP(\alpha)$, the scaling limit is a proper Markov process of interacting diffusions known as Brownian Energy Process ($\BEP(\alpha)$) (cf.\ \cite{gkrv}).
For the $\SEP(\gamma)$, this limit cannot be taken in the sense of Markov processes, but we can extend the SEP generator to
a larger class of functions defined on a larger configuration space and take the many-particle limit.
The limiting second-order differential operator is then not a Markov generator, but still a second order differential operator. We will explain this more in detail below.

The limiting  differential operators in the case of $\IRW$ and $\SIP(\alpha)$ can be described as acting on smooth functions $f : E^V \to \R$ as follows:
\beq \label{generators diffusions}
\caL f(z) &=& \sum_{x,y \in V} p(x,y) \caL_{x,y} f(z)\ ,\quad z \in E^V\ ,
\eeq
with single-edge generators $\caL_{x,y}$ given, respectively,  by
\beq \nonumber
\caL_{x,y} f(z) &=& [-(z_x - z_y) (\partial_x-\partial_y) ] f(z)\ ,\quad z \in E^V\ ,
\eeq
and
\beq \nonumber
\caL_{x,y} f(z) &=&  [-\alpha (z_x - z_y) (\partial_x-\partial_y) + z_x z_y (\partial_x-\partial_y)^2] f(z)\ ,\quad z \in E^V\ .
\eeq
For the $\SEP(\gamma)$ we proceed as follows. For each $N \in \N$, consider the operator $L^N$ working on functions $f : (\N/N)^V \to \R$ as
\beq \label{extendedsep}
L^N f(\tfrac{1}{N}\eta) &=& \sum_{x,y\in V} p(x,y) L^N_{x,y}f(\tfrac{1}{N}\eta)\ ,\quad  \eta \in \N^V\ ,
\eeq
where
\be \nonumber
L^N_{x,y} f(\tfrac{1}{N}\eta)= \eta_x(\gamma-\eta_y) (f(\tfrac{1}{N}\eta^{x,y})-f(\tfrac{1}{N}\eta)) +\eta_y(\gamma-\eta_x) (f(\tfrac{1}{N}\eta^{y,x})-f(\tfrac{1}{N}\eta))\ ,\quad \eta \in \N^V\ .
\ee
This operator is not a Markov generator anymore, because the factors $\eta_x(\gamma-\eta_y)$ can become negative.
With this operator, we  consider the limit
\beq \nonumber
\lim_{N \to \infty} (L^N f)(\tfrac{1}{N}\eta^N)\ ,
\eeq
where $\eta^N= (\lfloor N z_x\rfloor)_{x \in V}$ and $f : E^V \to \R$ is a smooth function.
This then gives the differential operator $\caL$ which is the analogue of \eqref{generators diffusions} in the context of
$\SEP(\gamma)$. This  differential operator $\caL$, with single-edge operators
\beq \label{sepdifop}
\caL_{x,y}f(z) &=& [-\gamma (z_x-z_y)(\partial_x-\partial_y) - z_x z_y(\partial_x-\partial_y)^2] f(z)\ ,\quad z \in E^V\ ,
\eeq
does not generate a Markov process but it is still useful because, as we will see
in Section \ref{section generating} below, via generating functions, it is intertwined with the operator \eqref{extendedsep} for the choice $N=1$.

\

 Naturally, as we can see for the case of $\SIP(\alpha)$ and $\BEP(\alpha)$, when going to the scaling limit, some properties concerning stationary measures and duality pass to the limit. Indeed, $\BEP(\alpha)$ admits a one-parameter family of stationary product measures $\{\otimes\ \nu_\lambda,\ \lambda > 0 \}$, where $\nu_\lambda \sim \text{Gamma}(\alpha,\lambda)$, namely
\beq \label{gamma distribution}
\nu_\lambda(dz) &=& z^{\alpha-1} e^{-\lambda z} \frac{\lambda^\alpha}{\Gamma(\alpha)} dz\ ,
\eeq
and is dual to $\SIP(\alpha)$ with factorized duality function $D(\xi,z)=\prod_{x \in V}d(\xi_x, z_x)$ given by
\beq \nonumber
d(k,z) &=& z^k \frac{\Gamma(\alpha)}{\Gamma(\alpha+k)}\ ,\quad k \in \N\ ,\quad z \in \R_+\ .
\eeq
After noting that property \eqref{Da}  holds also in this situation,  we show that the first single-site duality functions $d(1,x)$
between $\SIP(\alpha)$ and $\BEP(\alpha)$  are affine functions of $z \in \R_+$, as we found earlier for single-site self-dualities in Theorem \ref{gzrprop}.
\begin{proposition}
	Assume that $\SIP(\alpha)$ and $\BEP(\alpha)$'s single-edge generators  are dual with factorized duality function $D(\xi,z)=\prod d(\xi_x, z_x)$ with $d(0,\cdot)\equiv 1$ as in \eqref{factoo}--\eqref{Da}.  Then
	\beq\label{ghgh}
	d(1,z) &=& a + b z\ ,
	\eeq
	for some $a, b \in \R$.
\end{proposition}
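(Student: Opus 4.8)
The plan is to run, in the continuum, the argument used for Theorem~\ref{gzrprop}: test the operator duality relation on the dual configuration carrying exactly one particle. Write $g(z):=d(1,z)$, $z\in\R_+$. By the discussion preceding the proposition, the assumed duality is between the $\SIP(\alpha)$ single-edge generator
\[
L^{\SIP}_{x,y}f(\xi)=\xi_x(\alpha+\xi_y)(f(\xi^{x,y})-f(\xi))+\xi_y(\alpha+\xi_x)(f(\xi^{y,x})-f(\xi)),
\]
acting on the discrete variable $\xi$, and the $\BEP(\alpha)$ single-edge generator
\[
\caL_{x,y}f(z)=-\alpha(z_x-z_y)(\partial_x-\partial_y)f(z)+z_xz_y(\partial_x-\partial_y)^2f(z),
\]
acting on the continuous variable $z$; that is, the hypothesis is $(L^{\SIP}_{x,y})_{\eft}D=(\caL_{x,y})_{\ight}D$ with $D(\xi,z)=\prod_v d(\xi_v,z_v)$.

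First I would evaluate both sides at the dual configuration $\xi$ having $\xi_x=1$ and $\xi_v=0$ for $v\neq x$. By \eqref{Da} the product collapses to $D(\xi,z)=g(z_x)$; moreover $\xi^{x,y}$ carries a single particle at $y$, so $D(\xi^{x,y},z)=g(z_y)$, while the contribution of the jump $y\to x$ vanishes because $\xi_y=0$ forces $u(\xi_y)=0$. Using $u(1)=1$ and $v(0)=\alpha$ for $\SIP(\alpha)$, the left-hand side equals $\alpha(g(z_y)-g(z_x))$. On the right-hand side, since $z\mapsto g(z_x)$ depends on $z_x$ alone, $(\partial_x-\partial_y)g(z_x)=g'(z_x)$ and $(\partial_x-\partial_y)^2g(z_x)=g''(z_x)$, so that side equals $-\alpha(z_x-z_y)g'(z_x)+z_xz_yg''(z_x)$. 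Equating yields, for all $z_x,z_y\in\R_+$,
\[
\alpha(g(z_y)-g(z_x))=-\alpha(z_x-z_y)g'(z_x)+z_xz_yg''(z_x).
\]

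The decisive step is then to specialize to $z_x=z_y=t$: the left-hand side and the drift term on the right both vanish, leaving $t^2g''(t)=0$, hence $g''(t)=0$ for every $t>0$. Therefore $g$ is affine on $(0,\infty)$, and by continuity $d(1,z)=g(z)=a+bz$ on all of $\R_+$, which is the claim. (Substituting this form back shows the displayed identity holds for every $a,b\in\R$, in accordance with the statement not constraining the two constants.) The only point deserving care is regularity: one needs $d(1,\cdot)$ to be twice differentiable for $g''$ to make sense, which is implicit in the smooth-function setting of Section~\ref{section setting diffusion}; granting this, the computation is routine — it is the exact continuum counterpart of the discrete harmonicity identity \eqref{harmin} — and I do not expect any genuine obstacle.
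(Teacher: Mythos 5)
Your proof is correct and follows essentially the same route as the paper: test the operator duality on the single-dual-particle configuration, obtain the identity relating $d(1,z_y)-d(1,z_x)$ to the drift and diffusion terms, and set $z_x=z_y$ to force $z^2 d''(1,z)=0$, hence affinity. Your version is slightly more explicit about the left-hand side (carrying the factor $\alpha$ from the $\SIP$ rate, which the paper drops but which is immaterial once $z_x=z_y$) and about the regularity needed for $g''$, but the argument is the same.
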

\begin{proof}
	The duality relation for $\xi_x = 1$ and no particles elsewhere, by using \eqref{Da}, reads
	\beq \nonumber
	d(1,z_y) - d(1, z_x) &=&
	-\alpha (z_x - z_y) \partial_x d(1,z_x) + z_x z_y \partial_x^2 d(1,z_x)\ .
	\eeq
	If we set $z_x = z_y = z$, then $z^2 \frac{d^2}{dz^2} d(1,z)=0$ leads to \eqref{ghgh} as unique solution.
\end{proof}

%

\section{Relation between duality function and stationary product measure}\label{section general relation}

In the examples of duality that we have encountered in the previous section, we have a universal relation between
the stationary product measures and the duality functions.
Given $\{\eta(t),\ t \geq 0 \}$, if there is a dual process $\{\xi(t),\ t \geq 0 \}$ with factorized duality functions
$D(\xi,\eta)=\prod d(\xi_x,\eta_x)$ as in \eqref{factoo}--\eqref{Da} and stationary product measures $\{\mu_\lambda= \otimes\ \nu_\lambda,\ \lambda > 0\}$, then there is a relation between these measures
and these  functions, namely there exists a function $\theta(\lambda)$ such that
\beq\label{reldual}
\int D(\xi,\eta) \mu_\lambda (d\eta)\ =\ \prod_{x \in V} \int d(\xi_x,\eta_x) \nu_\lambda(d\eta_x) &=& \theta(\lambda)^{|\xi|}\ .
\eeq
This function $\theta(\lambda)$ is then simply the expectation of the first single-site duality function, i.e.\
\beq \nonumber
\theta(\lambda) &=& \int d(1,\eta_x) \nu_\lambda( d\eta_x)\ .
\eeq
In the examples of Section \ref{secex}, we have $\theta(\lambda)=\lambda$ for $\IRW$, $\theta(\lambda)=\tfrac{\lambda}{1-\lambda}$ for $\SIP(\alpha)$ and
$\theta(\lambda)=\tfrac{\lambda}{1+\lambda}$ for $\SEP(\gamma)$.

In this section we first  investigate under which general conditions this relation holds, and further use it in Sections \ref{section translation}--\ref{section non-translation} as a criterion of characterization of all extremal measures. The reader shall refer to Sections \ref{section setting duality}--\ref{section setting lattice} for the  general  setting in which these results hold.

Later on, we will see that this relation \eqref{reldual} is actually a characterizing property of the  factorized duality functions, meaning that
all duality functions are determined once the first single-site duality function is fixed.

\subsection{Finite case}\label{section finite}
We start with the simplest situation in which $V$ is a finite set.

First, we assume that the total number of
particles/the total energy of the dual process is the only conserved quantity.
More precisely, we assume the following property, which we refer to as \textit{harmonic triviality} of the dual system:
\begin{enumerate}
\item[\HT]
If $H:\hat E^V\to\R$ is harmonic, i.e.\ such that, for all $t>0$, $$\hat \E_\xi H(\xi(t))=H(\xi)\ ,$$ then
$H(\xi)$ is only a function of $|\xi|:=\sum_{x\in V} \xi_x$.
\end{enumerate}
Moreover, let us assume the factorized form of the duality function $D(\xi, \eta)$ as in \eqref{factoo}--\eqref{Da}.
Of the single-site functions $d$ we may require the following additional property:
\ben
\item[\Db] The function $d$ is \textit{measure determining}, i.e.\ for two probability measures
$\nu_\ast$, $\nu_\star$ on $E$ such that for all $x\in V$ and $\xi_x\in \hat E$
\[
\int_E d(\xi_x, \eta_x)\nu_\ast(d\eta_x)=\int_E d(\xi_x,\eta_x)\nu_\star(d\eta_x)<\infty\ ,
\]
it follows that $\nu_\ast=\nu_\star$.
\een

Then we have the following.
\bt\label{finitethm}
Assume that $\{\eta(t),\ t\geq 0\}$ and $\{\xi(t),\ t \geq 0\}$ are  dual as in \eqref{dualrel} with factorized duality function \eqref{factoo}
satisfying condition \eqref{Da}. Moreover, assume that \HT\ holds and that $\mu$ is a probability measure
on $\Omega$. We distinguish two cases:
\begin{enumerate}
\item[(1)] \emph{Interacting particle system case.} If $\hat E$ is a subset of $\N$, then we assume that the self-duality functions $D(\xi, \cdot)$ are $\mu$-integrable for all $\xi \in \hat \Omega$.
\item[(2)] \emph{Interacting diffusion case.} If $\hat E=\R_+$, then we assume the following integrability condition: for each $\varepsilon > 0$, there exists a $\mu$-integrable function $f_\varepsilon$ such that
\beq \label{unifintegrability}
\sup_{\xi \in \hat \Omega,\ |\xi|=\varepsilon} |D(\xi,\eta)| &\leq& f_\varepsilon(\eta)\ ,\quad \eta \in \Omega\ .
\eeq
\end{enumerate}
Then
\ben
\item[(a)]
$\mu$ is a stationary product measure for the process $\{\eta(t):t\geq 0\}$
\een
implies
\ben
\item[(b)] For all $\xi\in \Omega$ and for all $x\in V$,
we have
\beq\label{basicrelfi}
\int D(\xi,\eta) \mu (d\eta)&=& \left(\int D(\delta_x, \eta) \mu(d\eta)\right)^{|\xi|}\ ,
\eeq
\een
where $\delta_x$ denotes the configuration with a single particle at $x \in V$ and no particles elsewhere. Moreover, if condition \Db\ holds, the two statements (a) and (b) are equivalent.
\et
\bpr
First assume that $\mu$ is a stationary product measure.
Define $H(\xi)=\int D(\xi,\eta) \mu(d\eta)$. By $\mu$-integrability in the interacting particle system case (resp.\ \eqref{unifintegrability} in the interacting diffusion case),  self-duality and invariance of $\mu$ we have
\beq
\E_\xi H(\xi(t))\ =\ \int \E_\xi D(\xi(t),\eta) \mu(d\eta)\ =\ \int \E_\eta D(\xi,\eta(t)) \mu(d\eta)\nonumber\
 =\ \int  D(\xi,\eta) \mu(d\eta)
=H(\xi) \nonumber\ .
\eeq
Therefore by \HT\ we conclude that $H(\xi)=\psi(|\xi|)$. By using $d(0,\cdot)\equiv 1$ and
the factorization of the duality functions, we have that $\psi(0)=1$. For the particle case, we obtain that
\beq \nonumber
\int D(\delta_x, \eta) \mu(d\eta)&=& \psi(1)\ .
\eeq
In particular, we obtain that the l.h.s.\  does not depend on $x$.
Next, for $n\geq 2$, put
\beq \nonumber
\int D(n\delta_x, \eta) \mu(d\eta)&=& \psi(n)\ ,
\eeq
then we have for $x\not=y \in V$, using the factorized duality function and the product form of the measure,
\beq \nonumber
\psi(n)\ =\ \int D(n\delta_x, \eta) \mu(d\eta)\ =\ \int D(\delta_y,\eta) D((n-1)\delta_x, \eta) \mu(d\eta)\
=\ \psi(1) \psi(n-1)\ ,
\eeq
from which it follows that $\psi(n)= \psi(1)^n$. Via an analogous reasoning that uses the factorization of $D(\xi,\eta)$ and the product form of $\mu$, for the diffusion case we obtain, for all $\varepsilon, \rho \geq 0$,
\beq \nonumber
\psi(\varepsilon+\rho) &=& \psi(\varepsilon) \psi(\rho)\ ,
\eeq
and hence, by measurability of $\psi(\varepsilon)$, we get $\psi(\varepsilon)=\psi(1)^\varepsilon$.

To prove the other implication, put
\beq \nonumber
\int D(\delta_x, \eta) \mu(d\eta)&=&\kappa\ .
\eeq
We then have by assumption
\beq \nonumber
\int D(\xi,\eta)\mu (d\eta)&=&\kappa^{|\xi|}\ ,
\eeq
and so it follows that $\mu$ is stationary by self-duality, $\mu$-integrability, the conservation of the number of particles and the measure-determining property.  Indeed,
\beq \nonumber
\int \E_\eta D(\xi,\eta(t)) \mu (d\eta)\ =\  \int \E_\xi D(\xi(t),\eta) \mu (d\eta)\ =\ \E_\xi (\kappa^{|\xi(t)|})\ =\ \kappa^{|\xi|}=\int  D(\xi,\eta) \mu (d\eta)\ .
\eeq
From the factorized form of $D(\xi,\eta)$, \eqref{basicrelfi} implies that for all $x\in V$ and $\xi_x\in \hat E$
\beq \nonumber
\int d(\xi_x,\eta_x)\mu (d\eta) &=& \kappa^{\xi_x}\ ,
\eeq
and also
\beq \nonumber
\int D(\xi,\eta)\mu (d\eta)\ =\  \kappa^{|\xi|}\ =\ \prod_{x\in V}\kappa^{\xi_x}\ =\ \prod_{x\in V}\int d(\xi_x,\eta_x)\mu (d\eta)\ ,
\eeq
therefore $\mu$ is a product measure by the fact that $d$ is measure determining.
\epr

\subsection{Infinite case}\label{section infinite}
If $V=\Zd$, then one needs essentially two extra conditions to state an analogous result in which a general relation between duality functions
and corresponding stationary measures can
be derived.

In this section we will assume that the dual process
is a discrete particle system, i.e.\ $\hat E$ is a subset of $\N$,  in which
the number of particles is conserved. In this case we need an additional property ensuring that
for the dynamics of a finite number of particles there are no  bounded harmonic functions other than those depending on the total number of particles.
Therefore, we introduce the condition of existence of a successful coupling for the discrete
dual process with a finite number of particles.
This is defined below.
\bd
We say  that the discrete dual process $\{\xi(t),\ t\geq 0\}$ has the \emph{successful coupling property}
when the following holds:
if we start with $n$ particles then there exists a labeling such that for the
corresponding labeled process
$\{X_1(t), \ldots, X_n(t),\ t\geq 0\}$ there exists a successful coupling.
This means that for every two initial positions ${\bf x}=(x_1,\ldots, x_n)$ and ${\bf y}=(y_1,\ldots, y_n)$, there
exists a coupling with path space measure $\pee_{\bf x,y}$ such that the coupling time
\[
\tau=\inf \{s>0:\ {\bf X}(t)={\bf Y}(t),\ \forall t\geq s \}
\]
is finite $\pee_{\bf x,y}$ almost surely.
\ed
Notice that the successful coupling property
is the most common way to prove the following property (cf.\ \cite{ligg}), which is the analogue of \HT, referred here to as \textit{bounded harmonic triviality} of the dual process:
\begin{enumerate}
\item[\BHT]
If
 $H$ is a bounded harmonic function,
then $H(\xi)=\psi(|\xi|)$ for some bounded $\psi: \hat E \to \R$.
\end{enumerate}
\begin{remark}
	The condition of the existence of a successful coupling (and the consequent bounded harmonic triviality) is quite natural
	in the context of interacting particle systems, where we have that a finite number of walkers behave as independent walkers,
	except when they are close and interact. Therefore, the successful coupling needed is a variation of the Ornstein coupling
	of independent walkers, see e.g.\ \cite{demasi}, \cite{ferrari}, \cite{kuoch}.
\end{remark}

Furthermore, we need a form of uniform $\mu$-integrability of the duality functions which we introduce below and call \textit{uniform domination property} of $D$ w.r.t.\ $\mu$ (note the analogy with condition \eqref{unifintegrability}):

\begin{enumerate}
\item[\UD]
\label{unifintD}
Given $\mu$ a probability measure on $\Omega$, the duality functions $\{D(\xi,\cdot),\ |\xi|=n\}$ are uniformly $\mu$-integrable, i.e.\ for all $n \in \N$
there exists a function $f_n$ such that $f_n$ is $\mu$-integrable and such that for
all $\eta\in \Omega$
\be\label{domi}
\sup_{\xi \in \hat \Omega,\ |\xi|=n} |D(\xi, \eta)|\leq f_n(\eta)\ .
\ee
\end{enumerate}

Under these conditions, the following result holds, whose proof resembles that of Theorem \ref{finitethm}.
\begin{theorem}\label{infinitethm}
Assume as in \eqref{dualrel} that $\{\eta(t),\ t\geq 0\}$ is dual to the discrete process $\{\xi(t),\ t \geq 0 \}$ with factorized duality function as in \eqref{factoo}--\eqref{Da}. Moreover, assume \BHT\ in place of \HT\ for the dual process and that $\mu$ is a probability measure
on $\Omega$ such that \UD\ holds. Then the same conclusions as in Theorem \ref{finitethm} follow, where \eqref{basicrelfi} holds for all finite configurations $\xi \in \hat \Omega$.
\end{theorem}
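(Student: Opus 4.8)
The plan is to replay the proof of Theorem~\ref{finitethm} essentially verbatim, with \BHT\ taking over the role of \HT\ and with the uniform domination \UD\ supplying every interchange of integrals that was automatic in finite volume. The structural observation that makes this possible is that, since the dual dynamics conserves the number of particles, a dual process $\{\xi(t),\ t\ge 0\}$ started from a \emph{finite} configuration $\xi$ stays forever in the set $\{\xi'\in\hat\Omega:\ |\xi'|=|\xi|\}$; hence along the whole trajectory $|D(\xi(t),\cdot)|$ is dominated by the \emph{single} $\mu$-integrable function $f_{|\xi|}$ of \eqref{domi}, which is exactly what legitimizes both the Fubini steps and the application of \BHT. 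Throughout one works only with finite $\xi$.

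Assume (a), i.e.\ $\mu=\otimes_x\mu_x$ is a stationary product measure, and put $H(\xi)=\int D(\xi,\eta)\,\mu(d\eta)$; by \eqref{domi} this is finite and, on each level $\{|\xi|=n\}$, bounded by $\int f_n\,d\mu$. First I would check that $H$ is harmonic for the dual process: using $|\xi(t)|=|\xi|$ almost surely to dominate by $f_{|\xi|}$, Fubini gives $\hat\E_\xi H(\xi(t))=\int\hat\E_\xi D(\xi(t),\eta)\,\mu(d\eta)$, and \eqref{dualrel} followed by stationarity of $\mu$ (and $\mu$-integrability of $D(\xi,\cdot)$ from \eqref{domi}) turns this into $\int\E_\eta D(\xi,\eta(t))\,\mu(d\eta)=\int D(\xi,\eta)\,\mu(d\eta)=H(\xi)$. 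Since $H$ is bounded on each level and the $n$-particle dual dynamics has only trivial bounded harmonic functions (this is \BHT, guaranteed level by level by the successful-coupling hypothesis), we get $H(\xi)=\psi(|\xi|)$, and $d(0,\cdot)\equiv 1$ gives $\psi(0)=1$ and $\psi(1)=\int D(\delta_x,\eta)\,\mu(d\eta)$ for every site $x$. Now I would use the product form of $\mu$: choosing $n$ distinct sites $x_1,\dots,x_n$ (possible as $V=\Zd$ is infinite) and $\xi=\delta_{x_1}+\dots+\delta_{x_n}$, the factorization \eqref{factoo} together with $d(0,\cdot)\equiv1$ and the product structure yields $\psi(n)=H(\xi)=\prod_{i=1}^n\int d(1,\cdot)\,d\mu_{x_i}=\psi(1)^n$. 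Hence $\int D(\xi,\eta)\,\mu(d\eta)=\psi(|\xi|)=\psi(1)^{|\xi|}=\big(\int D(\delta_x,\eta)\,\mu(d\eta)\big)^{|\xi|}$ for every finite $\xi$, which is \eqref{basicrelfi}.

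For the converse, assume \Db\ and \eqref{basicrelfi}. Taking $\xi$ to be one particle at an arbitrary site shows $\kappa:=\int D(\delta_x,\eta)\,\mu(d\eta)$ does not depend on $x$, so \eqref{basicrelfi} reads $\int D(\xi,\cdot)\,d\mu=\kappa^{|\xi|}$ for all finite $\xi$. To prove $\mu$ stationary, I would compute, for any finite $\xi$ and $t>0$, using \eqref{dualrel}, conservation of particle number and \UD\ for the Fubini step,
\[
\int\E_\eta D(\xi,\eta(t))\,\mu(d\eta)=\int\hat\E_\xi D(\xi(t),\eta)\,\mu(d\eta)=\hat\E_\xi\!\int D(\xi(t),\eta)\,\mu(d\eta)=\hat\E_\xi\big[\kappa^{|\xi(t)|}\big]=\kappa^{|\xi|}=\int D(\xi,\cdot)\,d\mu,
\]
so the evolved measure $\mu_t$ satisfies $\int D(\xi,\cdot)\,d\mu_t=\int D(\xi,\cdot)\,d\mu$ for every finite $\xi$. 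Finally, exactly as in Theorem~\ref{finitethm}, the factorized form of $D$ turns \eqref{basicrelfi} into $\int d(k,\cdot)\,d\mu_x=\kappa^k$ for every site $x$ and $k\in\hat E$, and $\int D(\xi,\cdot)\,d\mu=\prod_x\int d(\xi_x,\cdot)\,d\mu_x$; since $d$ is measure determining (and the same identities hold for the one-site marginals of $\mu_t$), this forces $\mu=\otimes_x\mu_x$ and $\mu_t=\mu$, i.e.\ $\mu$ is a stationary product measure.

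The genuinely new difficulty relative to the finite case is entirely analytic: in infinite volume $D$ need not be bounded, the swap of $\hat\E_\xi$ with $\int\,\cdot\,\mu(d\eta)$ is not free, and, most importantly, \BHT\ applies only to \emph{bounded} harmonic functions whereas $H$ is only bounded on each level set. The resolution is the same in every instance and is precisely what \UD\ is designed for: conservation of the dual particle number confines $\{\xi(t)\}$ to a fixed level, so a single $\mu$-integrable dominator $f_{|\xi|}$ controls the entire trajectory and $H$ restricted to that level is a bona fide bounded harmonic function. The hypotheses on $p$ from Section~\ref{section setting lattice} (finite range, uniform bound on the total jump rate) together with the successful-coupling assumption underlying \BHT\ are what ensure the finite-particle dual dynamics is well posed and that its bounded harmonic functions are trivial, which is all the argument requires.
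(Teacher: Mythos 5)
Your proposal is correct and follows exactly the route the paper intends: the paper gives no separate proof of Theorem \ref{infinitethm}, stating only that it ``resembles that of Theorem \ref{finitethm}'', and your argument supplies precisely the missing details --- conservation of the dual particle number confines $\xi(t)$ to a level set so that the single dominator $f_{|\xi|}$ from \UD\ justifies the Fubini steps and makes $H$ a bounded harmonic function on that level, to which \BHT\ applies. The only (cosmetic) deviation is that you obtain $\psi(n)=\psi(1)^n$ by placing $n$ particles at distinct sites rather than by the paper's peel-one-particle-off recursion; both rest on the same use of \BHT\ and the product structure of $\mu$.
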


\subsubsection{Translation invariant case}\label{section translation}

In this section, we show that under the assumption of factorized duality functions, minimal ergodicity assumptions on a stationary probability measure $\mu$ on $\Omega$ are needed to ensure  \eqref{basicrelfi} and, as a consequence, that $\mu$ is product measure.

Here we restrict to the case $V=\Zd$ because we will use spatial ergodicity.

\bt\label{transthm}
In the setting of Theorem \ref{infinitethm} with $D(\xi,\eta)$ factorized duality function and $\mu$ probability measure on $\Omega$, if $\mu$ is a translation-invariant and ergodic (under translations)
stationary measure for $\{\eta(t),\ t \geq 0 \}$, then we have \eqref{basicrelfi} for all finite configurations $\xi$; as a consequence, $\mu$ is a product measure.
\et
\bpr
To start, let us consider a configuration $\xi=\sum_{i=1}^n \delta_{x_i}$.
By bounded harmonic triviality \BHT, combined with the bound \eqref{domi} for all such configurations,
$\int D(\xi, \eta) \mu(d\eta)$ is only depending on $n$ and, therefore,
we can replace $\xi$ by $\sum_{i=1}^{n-1} \delta_{x_i} +\delta_y$, where $y$ is
arbitrary in $\Zd$. Let us call $B_N=[-N,N]^d\cap\Zd$. Fix $N_0$ such that $B_{N_0}$ contains
all the points $x_1,\ldots x_{n-1}$. For $y$ outside $B_{N_0}$, by the factorization property,
$D(\sum_{i=1}^{n-1} \delta_{x_i}+\delta_y, \eta)=D(\sum_{i=1}^{n-1} \delta_{x_i}, \eta) D(\delta_y, \eta)$.
By the Birkhoff ergodic theorem,
we have that
\[
\frac{1}{(2N+1)^d}\sum_{y\in B_N} D(\delta_y, \eta)\to \int D(\delta_0, \eta)\mu(d\eta)
\]
$\mu$-a.s.\ as $N\to\infty$.
Using this, together with  \eqref{domi},
we have
\beq\label{bombi}
&&\int D(\xi, \eta) \mu(d\eta)
\nonumber\\
&&=\lim_{N\to\infty}\frac{1}{(2N+1)^d}\sum_{y\in B_N\setminus B_{N_0}} \int D(\sum_{i=1}^{n-1} \delta_{x_i}, \eta) D(\delta_y, \eta)\mu(d\eta)
\nonumber\\
&&= \int D(\sum_{i=1}^{n-1} \delta_{x_i}, \eta) \mu(d\eta) \int D(\delta_0, \eta)\mu(d\eta)
\nonumber\\
\eeq
Iterating this argument gives \eqref{basicrelfi}.
\epr

\br
 As follows clearly from the proof, the condition of factorization of the duality function can be replaced by the weaker condition of
\[
\lim_{|y|\to\infty} (D(\delta_{x_1}+\ldots+ \delta_{x_n} + \delta_y, \eta)- D(\delta_{x_1}+\ldots+ \delta_{x_n},\eta) D( \delta_y, \eta))=0
\]
for all $\eta$, $x_1,\ldots,x_n$. We note that this approximate factorization of the duality function leads to \eqref{basicrelfi}, though $\mu$ is not necessarily a product measure.
\er
\subsubsection{Non-translation invariant case}\label{section non-translation}
We continue here with $V=\Zd$ but drop the
assumption of translation invariance. Indeed,
equality \eqref{basicrelfi} is also valid in contexts where one cannot rely on translation invariance.
Examples include spatially inhomogeneous $\SIP(\boldsymbol \alpha)$ and $\SEP(\boldsymbol \seppar)$, where the parameters   $\boldsymbol \alpha = (\alpha_x)_{x \in V}$ and $\boldsymbol \gamma = (\gamma_x)_{x \in V}$ in Section \ref{section setting particle} may depend on the site accordingly (cf.\ e.g.\ \cite{rs}). Also in this inhomogeneous setting the self-duality functions factorize over  sites and the stationary measures are in product form, with site-dependent single-site duality functions, resp. site-dependent marginals. We will show that the relation  \eqref{basicrelfi} between
the self-duality functions and any ergodic stationary measure still holds, and as a consequence this ergodic stationary measures is in fact a product measure. The idea is that the averaging over space w.r.t.\ $\mu$, used in the proof of Theorem \ref{transthm} above, can be replaced by a time average.

If we start with a single dual particle, the dual process is a continuous-time random walk on $V$, for which we
denote by $p(t; x,y)$ the transition probability to go from $x$ to $y$ in time $t>0$.
A basic assumption will then be
\be\label{zeropt}
\lim_{t\to\infty} p(t; x,y)=0
\ee
for all $x, y \in V$.
\bt\label{tempergthm}
In the setting of Theorem \ref{infinitethm} with $D(\xi,\eta)$ duality function and $\mu$ probability measure on $\Omega$, if $\mu$ is an ergodic stationary measure for the process $\{\eta(t),\ t \geq 0 \}$ and \eqref{zeropt} holds for the dual particle, then we have \eqref{basicrelfi} for all finite configurations $\xi$; as a consequence, $\mu$ is a product measure.
\et
\bpr
The idea is to replace the spatial average in the proof of Theorem \ref{transthm} by a Cesaro average over time, which we can deal by combining
assumption \eqref{zeropt} with the assumed temporal ergodicity.

Fix $x_1,\ldots,x_n\in V$, $y\in V$.
Define
\beq \nonumber
H_{n+1}(x_1,\ldots,x_n,y)&=& \int D(\delta_{x_1}+ \ldots+\delta_{x_n}+ \delta_y,\eta) \mu(d\eta)
\\
\nonumber
H_1(y)&=&  \int D(\delta_y,\eta) \mu(d\eta)
\\ \nonumber
H_{n}(x_1,\ldots,x_n)&=& \int D(\delta_{x_1}+ \ldots+\delta_{x_n},\eta) \mu(d\eta)\ .
\eeq

It is sufficient to obtain that $H_{n+1}(x_1,\ldots,x_n,y)= H_1(y)H_{n}(x_1,\ldots,x_n)$.
We already
know by the bounded harmonic triviality that $H_{n}$ only depends on $n$ and not on the given locations $x_1,\ldots,x_n$.
Therefore, we have
\beq \nonumber
H_{n+1}(x_1,\ldots,x_n,y) &=& \sum_{z} p(t; y,z) H_{n+1}(x_1,\ldots,x_n,z)\ .
\eeq
By assumption \eqref{zeropt}, this implies
\beq
&&H_{n+1}(x_1,\ldots,x_n,y)\nonumber\\ &=& \lim_{T\to\infty}\frac1T \int_0^T dt \sum_{z\not \in \{x_1,\ldots,x_n\}} p(t; y,z) H_{n+1}(x_1,\ldots,x_n,z)
\nonumber\\
&=&
\lim_{T\to\infty}\frac1T \int_0^T dt \sum_{z\not \in \{x_1,\ldots,x_n\}} p(t; y,z) \int D(\delta_{x_1}+ \ldots+\delta_{x_n},\eta)D( \delta_y,\eta) \mu(d\eta)
\nonumber\\
&=& \lim_{T\to\infty}\frac1T \int_0^T dt \sum_{z} p(t; y,z) \int D(\delta_{x_1}+ \ldots+\delta_{x_n},\eta)D( \delta_y,\eta) \mu(d\eta)
\nonumber\\
&=&
\lim_{T\to\infty}\frac1T\int_0^T dt  \int D(\delta_{x_1}+ \ldots+\delta_{x_n},\eta)\E_\eta D( \delta_y,\eta(t)) \mu(d\eta)
\nonumber\\
&=&
H_1(y)H_{n}(x_1,\ldots,x_n)\ , \nonumber
\eeq
where in the last step we used the assumed temporal ergodicity of $\mu$ and Birkhoff ergodic theorem.
\epr

\section{From stationary product measures to duality functions.} \label{section general construction}
As we have just illustrated in Sections \ref{section translation}--\ref{section non-translation}, relation \eqref{reldual} turns out to be useful in deriving information about the product structure of stationary ergodic measures from the knowledge of factorized duality functions. On the other side, granted some information on the stationary product measures, which follows usually from a simple detailed balance computation, up to which extent does relation \eqref{reldual} say something about the possible factorized duality functions?

In the context of conditions \eqref{factoo}--\eqref{Da} and in presence of a one-parameter family of stationary product measures $\{\mu_\lambda = \otimes\ \nu_\lambda,\ \lambda > 0 \}$, relation \eqref{reldual}  for $\xi = k  \delta_x \in \hat \Omega$ for some $k \in \N$ reads
\beq \label{reldual5}
\int_E d(k,\eta_x) \nu_\lambda(d\eta_x)\ =\ \left(\int_E d(1,\eta_x) \nu_\lambda(d\eta_x) \right)^k &=& \theta(\lambda)^k\ .
\eeq
As a consequence, knowing the first single-site duality function $d(1,\cdot)$ and the explicit expression of the marginal $\nu_\lambda$ is enough  to recover the l.h.s.\ in \eqref{reldual5}. However,  rather than obtaining $d(k,\cdot)$, at this stage the l.h.s.\ has still the form of an \textquotedblleft integral transform\textquotedblright-type of expression for $d(k,\cdot)$.

In the next two subsections,  we show
how to recover $d(k,\eta_x)$ from \eqref{reldual5} and the knowledge of $\theta(\lambda)$.
This then leads to the characterization of all possible factorized (self-)duality functions.


\subsection{Particle systems and orthogonal polynomial self-duality functions}\label{section particle systems duality}
Going back to the interacting particle systems introduced in Section \ref{section setting particle} with infinitesimal generator \eqref{gzrpgen} and product stationary measures with marginals \eqref{marginals}, the integral relation \eqref{reldual5} rewrites, for each $k \in \N$ and $\lambda > 0$ for which $Z_\lambda < \infty$,
as
\beq \nonumber
\sum_{n \in \N} d(k,n) \nu_\lambda(n)\ =\ \sum_{n \in \N} d(k,n) \varphi(n) \frac{\lambda^n}{n!} \frac{1}{Z_\lambda} &=& \theta(\lambda)^k\ ,
\eeq
where $\varphi(n)= n! \prod_{m=1}^n \tfrac{v(m-1)}{u(m)}$. Now, if we interpret
\beq \nonumber
\sum_{n \in \N} d(k,n) \varphi(n) \frac{\lambda^n}{n!}
\eeq
as the \emph{Taylor series expansion} around $\lambda=0$ of the function $\theta(\lambda)^k Z_\lambda$, we can  re-obtain the explicit formula of $d(k,n) \varphi(n)$ as its $n$-th order derivative evaluated at $\lambda=0$, namely
\beq \nonumber
d(k,n) \varphi(n) &=& \left(\left[ \frac{d^n}{d\lambda^n}\right]_{\lambda=0}\theta(\lambda)^k Z_\lambda \right)\ ,
\eeq
and hence, anytime $\varphi(n) > 0$,
\beq\label{dnu}
d(k,n) &=& \frac{1}{\vi(n)}\left(\left[ \frac{d^n}{d\lambda^n}\right]_{\lambda=0}\theta(\lambda)^k Z_\lambda\right)\ .
\eeq

Together with the full characterization obtained in Theorem  \ref{gzrprop} of the first single-site self-duality functions $d(1,\cdot)$ - and $\theta(\lambda)$ in turn - we obtain via this procedure a full characterization of all single-site self-duality functions. Beside recovering the \textquotedblleft classical" dualities illustrated in Section \ref{section setting particle}, we find  the single-site self-duality functions in terms of orthogonal polynomials $\{p_k(n),\ k \in \N \}$ of a discrete variable (cf.\ e.g.\ \cite{Nikiforov}) recently discovered via a different approach in \cite{chiara}. We add the observation that all these new single-site self-duality functions can be obtained  from the classical ones via a Gram-Schmidt orthogonalization procedure w.r.t.\ the correct probability measures on $\N$, namely the marginals of the associated stationary product measures (cf.\ \cite{chiara}).

%

We divide the discussion in three cases, one suitable for processes of $\IRW$-type, the other for $\SIP$ and $\SEP$ and the last one for the remaining particle systems.

\subsubsection{Independent random walkers} We recall that the $\IRW$-case corresponds to the choice of values in \eqref{didi} satisfying the relation $v(1)=v(0)$.   If we compute $\theta(\lambda)$ for the general first single-site self-duality function $d(1,n)=a+b n$ obtained in \eqref{singdual1}, we get
\beq \nonumber
\theta(\lambda) &=& \sum_n (a+b n) \frac{\lambda^n}{n!}\frac{1}{Z_\lambda}= a + b\lambda\ .
\eeq
and, in turn via relation \eqref{dnu}, we recover all functions $d(k,\cdot)$, for $k > 1$:
\beq \nonumber
d(k,n)&=&  \left( \left[\frac{d^n}{d \lambda^n} \right]_{\lambda=0} \left(a + b\lambda\right)^k e^\lambda \right) \\
&=& \sum_{r=0}^n \binom{n}{r} k (k-1) \cdots (k-r+1) b^r a^{k-r}\ .\nonumber
\eeq
In case $a=0$, $d(k,n) = 0$ for $n < k$, while for $n \geq k$, in the summation all terms   but the one corresponding to $r=k$ vanish, thus $d(k,n)= \frac{n!}{(n-k)!} b^k$. In case $a \neq 0$,
\beq \label{single-site irw}
d(k,n) &=& a^k \sum_{r=0}^{\min(k,n)} \binom{n}{r} \binom{k}{r} r!  \left( \frac{b}{a}\right)^r\ =\ a^k\ \pFq{0}{2}{-k,-n}{-}{\frac{b}{a}}\ .
\eeq
In conclusion, for the choice $a \cdot b < 0$,
\beq \nonumber
d(k,n) &=& a^k\  C_k(n; -\tfrac{a}{b})\ ,
\eeq
where $\{C_k(n; \mu),\ k \in \N\}$ are the Poisson-Charlier polynomials - orthogonal polynomials w.r.t.\ the Poisson distribution of parameter $\mu > 0$ (cf.\ \cite{Nikiforov}).
\subsubsection{Inclusion and exclusion processes} For $\SIP$ and $\SEP$ we are in the  case $v(1) \neq v(0)$, and hence we abbreviate
\beq \nonumber
\sigma\ =\ v(1)-v(0)\ ,\quad \beta\ =\ v(0)\ ,
\eeq
where for $\SIP(\alpha)$ we choose $\sigma=1$ and $\beta=\alpha$, while for  $\SEP(\gamma)$ we set $\sigma=-1$ and $\beta=\gamma$.
If we compute $\theta(\lambda)$  for $d(1,n)=a+bn$ in \eqref{singdual1}, we have
\beq
\nonumber
\theta(\lambda)&=&   a + b \beta \lambda\left(1-\sigma \lambda \right)^{-1}\ =\ \frac{a+\left(b \beta  - a \sigma \right)\lambda}{1-\sigma \lambda}\ .
\eeq
By  applying formula \eqref{dnu}, we obtain all functions $d(k,n)$ for $k > 1$ as follows:
\beq
\nonumber
d(k,n)&=&\nonumber \frac{1}{\vi(n)}\left(\left[ \frac{d^n}{d\lambda^n}\right]_{\lambda=0}\left(a+\left(b\beta  - a \sigma \right) \lambda \right)^k \left(1-\sigma \lambda \right)^{-k-\sigma \beta} \right)\ \\
\nonumber
&=& \frac{\Gamma\left(\sigma \beta \right)}{\Gamma\left(\sigma \beta+n \right)} \sum_{r=0}^n  \binom{n}{r} \binom{k}{r} r! a^{k-r} \left(b \sigma \beta - a  \right)^{r}    \frac{\Gamma\left(\sigma \beta + n + k -r \right)}{\Gamma\left(\sigma \beta + k \right)}\ .
\eeq
In case $a=0$, clearly $d(k,n)=0$ for $k < n$, while for $n \geq k$ only the term for $r=k$  is nonzero in the summation:
\beq \label{dknclassical}
d(k,n) &=&  \frac{n!}{(n-k)!} \frac{\Gamma\left(\sigma \beta \right)}{\Gamma\left(\sigma\beta  + k\right)} \left(b \sigma\beta \right)^k\ .
\eeq
In case $a \neq 0$, by using the known relation (cf.\ \cite{Nikiforov}, p.\ 51),
\beq \nonumber \label{dknhyper}
d(k,n) &=& a^k\ \frac{\Gamma\left(\sigma\beta \right) \Gamma\left(\sigma \beta+n+k \right)}{\Gamma\left(\sigma \beta+n \right) \Gamma\left(\sigma \beta + k \right)}\ \pFq{2}{1}{-n, -k}{-n-k-\sigma \beta+1}{1-\frac{b}{a}\sigma\beta}\\
&=& a^k\ \pFq{2}{1}{-n, -k}{\sigma \beta}{\frac{b}{a} \sigma \beta}\ .
\eeq
If $\sigma=1$, $\beta = \alpha> 0$ and if $a \cdot b < 0$, we recognize in \eqref{dknhyper} the Meixner polynomials as defined in \cite{Nikiforov}, i.e.\
\beq \nonumber
d(k,n) &=& a^k\ \frac{\Gamma\left(\alpha \right)}{\Gamma\left(\alpha + k \right)}\ M_k(n;  \mu, \alpha)\ ,
\eeq
where in our case $\mu= \frac{a}{a-b \alpha}$ and $\{M_k(n;  \mu, \alpha),\ k \in \N \}$ are the Meixner polynomials - orthogonal polynomials w.r.t.\ the discrete Gamma distribution of scale parameter $\mu$ and shape parameter $\alpha$.

Furthermore, if $\sigma=-1$, $\beta =  \seppar$ with $\seppar \in \N$ and given the additional requirements  $a \cdot b < 0$ and $-\frac{a}{b} \leq  \seppar$, from the expression in \eqref{dknhyper} we have a representation of the single-site duality functions in terms of the Kravchuk polynomials as defined in \cite{Nikiforov}, i.e.\
\beq \nonumber
d(k,n) &=& a^k\ K_k(n; p, \seppar )\ \left( -\frac{1}{p} \right)^k \frac{1}{\binom{\seppar}{k}}\ ,
\eeq
where $p= -\frac{a}{b \seppar}$ in our case and $\{K_k(n; p, \seppar),\ k \in \N \}$ the Kravchuk polynomials - orthogonal polynomials w.r.t.\ the Binomial distribution of parameters $\seppar$ and $p$.

\

As a conclusion of this procedure, we note that all  factorized self-duality functions for independent random walkers, inclusion and exclusion processes satisfying \eqref{Da}  are either in the \textquotedblleft classical" form of Section \ref{section setting particle} (case $a=0$) or consist of products of rescaled versions of orthogonal polynomials (case $a \neq 0$). Other factorized self-duality functions for the systems $\IRW$, $\SIP$ and $\SEP$ do not exist.

\begin{remark}
It is interesting to note that, apart from the leading factor  $a^k$, the remaining \emph{polynomials} in the expressions of $d(k,n)$ for $a \neq 0$ are \textquotedblleft \emph{self-dual}" in the sense of the orthogonal polynomials literature, i.e.\ $p_n(k)=p_k(n)$ in our context  (cf.\ Definition 3.1, \cite{koekoek}). Henceforth, if $d(k,n)$ is interpreted as a countable matrix, the value $a \in \R$ is the only responsible for the asymmetry of $d(k,n)$: upper-triangular for $a=0$ while symmetric for $a=1$.
\end{remark}

\subsubsection{Trivial factorized self-duality}
To conclude, for the sake of completeness, we can implement the same machinery to cover  all factorized self-dualities with property \eqref{Da} for all discrete processes of  type \eqref{gzrpgen}.

Indeed, from the proof of Theorem \ref{gzrprop}, if the process is neither of the types $\IRW$, $\SIP$ and $\SEP$, then the only possible choice is $d(1,n)=a$ for some $a \in \R$, i.e.\ it is not depending on $n$. From this we get
$\theta(\lambda)= a$, and $d(k,n)=a^k$ from  formula \eqref{dnu}. Hence, the self-duality functions must be of the form
\beq \nonumber
D(\xi,\eta) &=& \prod_{x \in V}d(\xi_x,\eta_x)\ =\ a^{|\xi|}\ ,
\eeq
i.e.\ depending only on the total number of dual particles (and not on the configuration $\eta$). Hence, the duality relation in that case reduces to the trivial relation, for all $t \geq 0$ and $\xi \in \hat \Omega$,
\[
\E_\xi a^{|\xi_t|}= a^{|\xi|}\ ,\quad a \in \R\ ,
\]
which is just conservation of the number of particles in the dual process. No other self-duality relation with factorized self-duality functions
can exist.

\subsection{Interacting diffusions and orthogonal polynomial duality functions}\label{section Laplace}
As shown in Theorem \ref{finitethm}, relation \eqref{reldual5} still holds whenever the discrete  right-variables $n \in \N$ are replaced by continuous variables $z \in \R_+$ and sums by integrals. With this observation in mind, we provide a second general method to \emph{characterize all factorized duality functions} between the continuous process $\BEP(\alpha)$ and its discrete dual $\SIP(\alpha)$.

More precisely, if $d(k,z)$ is a single-site duality function with property \eqref{Da}  between  $\BEP(\alpha)$
and  $\SIP(\alpha)$, and $\nu_\lambda$ is the stationary product measure marginal for $\BEP(\alpha)$ as in \eqref{gamma distribution}, then, from the analogue of relation \eqref{reldual5} for $k=1$, namely
\beq \label{reldual1c}
\int_{\R_+} d(1,z) z^{\alpha-1} e^{-\lambda z}\frac{\lambda^\alpha}{\Gamma(\alpha)}  dz &=& \theta(\lambda)\ ,
\eeq
we necessarily have by Theorem \ref{finitethm} that
\beq \label{reldual2c}
\int d(k,z) \frac{z^{\alpha-1}}{\Gamma(\alpha)} e^{-\lambda z} dz &=& \theta(\lambda)^k  \lambda^{-\alpha}\  .
\eeq
As a consequence, the function $d(k,z) \frac{z^{\alpha-1}}{\Gamma(\alpha)}$ is the inverse Laplace transform of $\theta(\lambda)^k \lambda^{-\alpha}$.
Given the first single-site duality function $d(1,z)$ in \eqref{ghgh}, from \eqref{reldual1c} we obtain
\beq \nonumber
\theta(\lambda) &=& \int (a+bz) z^{\alpha-1} e^{-\lambda z} \frac{\lambda^\alpha}{\Gamma(\alpha)}  dz = \left(a \lambda + b \alpha \right) \lambda^{-1}\ .
\eeq
As a consequence, the r.h.s.\ in \eqref{reldual2c} becomes
\beq \label{hjhj}
\theta(\lambda)^k \lambda^{-\alpha} &=& \left(a \lambda + b \alpha \right)^k \lambda^{ - \alpha-k}\ ,
\eeq
and there exist explicit expressions for the inverse Laplace transform of this function. We  split the computation in  two cases.
In case $a=0$, since the inverse Laplace transform of $\lambda^{-\alpha-k}$ is $\frac{z^{\alpha+k-1}}{\Gamma(\alpha+k)}$, we  immediately obtain
\beq \nonumber
d(k,z) &=& (b \alpha)^k \frac{z^k \Gamma(\alpha)}{\Gamma(\alpha+k)}\ ,
\eeq
i.e.\ the \textquotedblleft classical" single-site duality function as in Section \ref{section setting diffusion}, up to set $b=\frac{1}{\alpha}$.
In case $a \neq  0$, the inverse Laplace transform of \eqref{hjhj} is more elaborated:
\beq \nonumber
a^k\ \frac{z^{\alpha-1}}{\Gamma(\alpha)}\ \pFq{1}{1}{-k}{\alpha}{-\frac{b \alpha}{a} z}\ .
\eeq
As the above expression must equal $d(k,z) \frac{z^{\alpha-1}}{\Gamma(\alpha)}$, it follows that
\beq \nonumber
d(k,z) &=& a^k\ \pFq{1}{1}{-k}{\alpha}{-\frac{b \alpha}{a} z}\ .
\eeq
As a final consideration, we  note that for the choice $a \cdot b < 0$,
\beq
d(k,z) &=& a^k\ \frac{k! \Gamma(\alpha)}{\Gamma(\alpha+k)}\ L_k(z; \alpha-1, \mu)\ ,
\eeq
where $\mu= -\frac{b \alpha}{a}$ here and $\{L_k(z; \alpha-1, \mu),\ k \in \N  \}$ are the generalized Laguerre polynomials - orthogonal polynomials w.r.t.\ to the Gamma distribution of scale parameter $\frac1\mu$ and shape parameter $\alpha$ as defined in \cite{Nikiforov}.

\section{Intertwining and generating functions}\label{section generating}

In this section, we introduce the generating function method, which allows to go from a self-duality of a
discrete process towards duality between a discrete and continuous process,  and further towards a self-duality of a
continuous process, and back. This then allows e.g.\ to simplify the proof of a discrete self-duality  by lifting it to
a continuous self-duality, which is usually easier to verify. The key of this method is an intertwining  between discrete multiplication and derivation operators and
their continuous analogues, via an appropriate generating function.

To reduce issues of well-definition of operators and their domains, in this section we restrict our discussion to the case of  finite vertex set $V$.

\subsection{Introductory example}

To make the method clear, let us start with a simple example of independent random walkers on a single edge.
The generator is
\beq \nonumber
L f(n_1,n_2)&=& n_1 (f(n_1-1, n_2+1)- f(n_1,n_2)) + n_2  (f(n_1+1, n_2-1)- f(n_1,n_2))\ ,
\eeq
with $n_1, n_2 \in \N$.
Define now the (exponential) generating function
\beq \nonumber
\caG f(z_1,z_2)&=& \sum_{n_1,n_2=0}^\infty f(n_1,n_2) \frac{z_1^{n_1}}{n_1!} \frac{z_2^{n_2}}{n_2!}\ ,\quad z_1, z_2 \in \R_+\ .
\eeq
Then it is easy to see that
\beq \nonumber
\caL \caG &=& \caG L\ ,
\eeq
where
\beq \nonumber
\caL &=& -(z_1-z_2)(\partial_{z_1}-\partial_{z_2})\ .
\eeq
Now assume that we have a self-duality function for the particle system, i.e.\
\beq \nonumber
L_\eft D&=& L_\ight D\ ,
\eeq
then we have
\beq \nonumber
L_\eft \caD &=& \caL_\ight \caD\ ,
\eeq
where
\beq \nonumber
\caD(k_1, k_2; z_1, z_2)\ =\ \caG_{\ight} D(k_1, k_2; z_1, z_2)\ =\ \sum_{n_1, n_2=0}^\infty D(k_1,k_2; n_1,n_2) \frac{z_1^{n_1}}{n_1!} \frac{z_2^{n_2}}{n_2!}\ .
\eeq
In words, a self-duality function of $L$ is ``lifted'' to a duality function between the independent random walk generator $L$
and its continuous counterpart  $\caL$ by applying the generating function to the $n$-variables.
Conversely, given a duality function between the independent random walk generator $L$
and its continuous counterpart  $\caL$, its Taylor coefficients provide a self-duality function of $L$.

We can then also take the generating function w.r.t.\ the $k$-variables in the function  $\caD$ to produce
a self-duality function for $\caL$, i.e.\ defining
\beq \nonumber
\mathscr D(v_1, v_2; z_1, z_2)\ =\ \caG_\eft \caD(v_1, v_2; z_1, z_2)\ =\ \sum_{k_1, k_2=0}^\infty \caD(k_1, k_2; z_1, z_2) \frac{v_1^{k_1}}{k_1!} \frac{v_2^{k_2}}{ k_2!}\ ,
\eeq
we have
\beq \nonumber
\caL_\eft \mathscr D &=& \caL_\ight \mathscr D\ .
\eeq
For the classical self-duality function $D(k_1,k_2; n_1,n_2)=  \frac{n_1!}{(n_1-k_1)!}\frac{n_2!}{(n_2-k_2)!}\1\{k_1\leq n_1\}\1\{k_2\leq n_2\}$,
we find that
\beq \nonumber
\mathscr D(v_1, v_2; z_1, z_2)\ =\ e^{z_1+z_2} e^{v_1 z_1+ v_2 z_2}\ .
\eeq
Beside the factor $e^{z_1+z_2}$ which depends only on the conserved quantity $z_1+z_2$, to check the self-duality relation for $\caL$ w.r.t.\ the function $e^{v_1 z_1+ v_2 z_2}$ is rather straightforward, the computation involving only derivatives of exponentials. By looking at the Taylor coefficients w.r.t.\ both $v$ and $z$-variables of this self-duality relation, we obtain the self-duality relation for $L$ w.r.t.\ $D$ where we started from.

In conclusion, all these duality relations turn out to be equivalent, and the proof of self-duality for particle systems requiring rather intricate combinatorial arguments (cf.\ e.g.\ \cite{demasi}) is superfluous once the more direct self-duality for diffusion systems is checked.

\subsection{Intertwining and duality}

The notion of   \emph{intertwining} between stochastic processes was originally introduced by Yor in \cite{Yor} in the context of Markov chains and later pursued  in \cite{DF} and \cite{F} as an abstract framework, in discrete-time and continuous-time respectively, for the problem of Markov functionals, i.e.\ finding sufficient and necessary conditions under which a random function of a Markov chain is again Markovian.

For later purposes, we adopt a  rather general  definition of intertwining, in which $\{\eta(t),\ t \geq 0 \}$ and $\{\zeta(t),\ t \geq 0 \}$ are continuous-time stochastic processes on the Polish spaces $\Omega$ and $\Omega'$, respectively, whose expectations read $\E$, $\E'$ resp.,  and  $\caM( \Omega)$ denotes the space of signed measures on $\Omega$. We say that $\{\zeta(t),\ t \geq 0 \}$ is \emph{intertwined on top} of $\{\eta(t),\ t \geq 0 \}$ if there exists a mapping   $\varLambda: \Omega' \to \caM(\Omega)$   such that, for all $t \geq 0$, $\zeta \in \Omega'$ and $f : \Omega \to \R$,
\beq \label{intertwin}
\E'_\zeta \int_{\Omega'} f(\eta)\ \varLambda(\zeta(t))(d\eta) &=&\int_{\Omega} \E_{\eta} f(\eta(t))\ \varLambda(\zeta)(d\eta)\ .
\eeq
Working at the abstract level of \emph{semigroups}, we say that $\{\caS(t),\ t \geq 0 \}$
on a space of functions $f:\Omega'\to\R$ denoted by $\caF(\Omega')$, is \emph{intertwined on top} of  $\{S(t),\ t \geq 0 \}$, a semigroup on a space
of functions $f:\Omega\to\R$ denoted by $\caF(\Omega)$, with \emph{intertwiner} $\varLambda$ if $\varLambda$  is a linear operator
from $\caF(\Omega)$ into $\caF(\Omega')$ and if, for all $t \geq 0$ and $f : \Omega \to \R$,
\beq \nonumber
  \caS(t) \varLambda f&=& \varLambda S(t)  f\ .
\eeq
Similarly, \emph{operators} $\caL$ with domain $\caD(\caL)$  and $L$  with domain $\caD(L)$ are \emph{intertwined} with intertwiner $\varLambda$ if, for all $f \in \caD(L)$, $\varLambda f\in \caD(\caL)$ and
\beq \label{opintertwin}
\caL \varLambda f &=& \varLambda L  f\ .
\eeq
Notice that with a slight abuse of notation we used the same symbol $\varLambda$ for an abstract intertwining operator as for
the intertwining mapping. In other words, in case the intertwining mapping as in \eqref{intertwin} is given by $\tilde \varLambda$, then the corresponding operator is
\beq \nonumber
\varLambda f(\zeta) &=& \int f(\eta)\ {\tilde \varLambda(\zeta)}(d\eta)\ .
\eeq

An intertwining mapping $\varLambda$ has a  probabilistic interpretation if it takes values in the subset of probability measures on $\Omega$. Indeed, in \eqref{intertwin} the process $\{\zeta(t),\ t \geq 0 \}$ may be viewed as an added structure on top of $\{\eta(t),\ t \geq 0 \}$ or, alternatively, the process $\{\eta(t),\ t \geq 0 \}$ as a random functional of $\{\zeta(t),\ t \geq 0\}$, in which $\varLambda$ provides this link.
\br
The connection with duality introduced in Section \ref{section setting duality} becomes transparant when $\Omega$, $\hat \Omega$ and $\Omega'$ are finite sets and the operators and functions $L$, $\hat L$ $\caL$, $D$ and $\varLambda$ in \eqref{opdual} and \eqref{opintertwin} are represented in terms of matrices. There, relations \eqref{opdual} and \eqref{opintertwin}, once rewritten in matrix notation as
\beq \label{matrixdual}
\hat L D &=& D L^\dagger\ ,
\eeq
where $L^\dagger$ denotes the transpose of $L$,
and
\beq \label{matrixintertwin}
\caL \varLambda &=& \varLambda L\ ,
\eeq
differ essentially only in the terms $L^\dagger$ versus $L$ in the r.h.s.\ of both identities.
 The presence or absence of transposition can be interpreted as a \emph{forward}-versus-\emph{backward} evolution against a \emph{forward}-versus-\emph{forward} evolution. More precisely, if $L$, $\hat L$ and $\caL$ are generators of Markov processes $\{\eta(t), t \geq 0\}$, $\{\xi(t),\ t \geq 0 \}$ and $\{\zeta(t),\ t \geq 0 \}$, respectively, then \eqref{matrixdual} and \eqref{matrixintertwin} relate the evolution of $\{\eta(t),\ t \geq 0\}$ to that of $\{\xi(t),\ t \geq 0 \}$, resp.\ $\{\zeta(t),\ t \geq 0 \}$; however, while in \eqref{matrixintertwin} the processes run both along the same direction in time, in \eqref{matrixdual} the processes run along opposite time directions.
\er
Intertwiners as $\varLambda$ in \eqref{matrixintertwin} may be also interpreted as  natural generalizations of {\em symmetries of generators}, indeed \eqref{matrixintertwin} with $\caL = L$ just means that $\Lambda$ commutes with $L$, which is the definition of a symmetry of $L$. As outlined in \cite[Theorem 2.6]{gkrv}, the knowledge of symmetries of a generator and dualities of this generator leads to the construction of new dualities. The following theorem presents the analogue procedure in presence of intertwiners: a duality and an intertwining lead to a new duality.

\begin{theorem}\label{theorem intertwin}
	Let $L$, $\hat L$ and $\caL$ be operators on real-valued functions on $\Omega$, $\hat \Omega$ and $\Omega'$, respectively. Suppose that there exists an intertwiner $\varLambda$ such that for all $f  \in \caD(L)$, $\varLambda f \in \caD(\caL)$,
	\beq \nonumber
	\caL \varLambda  f &=& \varLambda L  f\ ,
	\eeq
	and a duality function $D: \hat \Omega \times \Omega \to \R$ for $\hat L$ and $L$, namely $D(\xi,\cdot) \in \caD(L)$ for all $\xi \in \hat \Omega$, $D(\cdot,\eta) \in \caD(\hat L)$ for all $\eta \in \Omega$  and
	\beq \nonumber
	\hat L_\eft D &=& L_\ight D\ .
	\eeq
	Then, if $\varLambda_\ight D(\xi,\cdot) \in \caD(\caL)$ for all $\xi \in \hat \Omega$ and $\varLambda_\ight D(\cdot,\zeta) \in \caD(\hat L)$ for all $\zeta \in \Omega'$, $\varLambda_\ight D$ is a duality function for $\hat L$ and $\caL$, i.e.
	\beq \nonumber
	\hat L_\eft \varLambda_\ight D &=& \caL_\ight \varLambda_\ight D\ .
	\eeq
\end{theorem}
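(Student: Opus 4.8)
The plan is to prove the asserted operator duality $\hat L_\eft \varLambda_\ight D = \caL_\ight \varLambda_\ight D$ by a direct chain of three identities, one contributed by each hypothesis; the proof is formal, and essentially the whole content is bookkeeping of which variable each operator acts on. Note that $\hat L$ acts on the first variable of $\varLambda_\ight D(\xi,\zeta)$ and $\caL$ on the last, while $\varLambda_\ight$ transforms the \emph{original} right variable $\eta$ of $D$ into $\zeta$. The one point needing a word of justification is that $\varLambda_\ight$, acting in the $\eta$-slot, commutes with $\hat L_\eft$, acting in the $\xi$-slot: since $\varLambda$ is linear and, for the interacting particle systems considered on a finite $V$, $\hat L$ is a finite linear combination of shift operators, one has $\varLambda_\ight(\hat L_\eft D) = \hat L_\eft(\varLambda_\ight D)$ termwise; in the general case this is the interchange of $\hat L$ with $\varLambda$ along the $\eta$-variable, which is exactly what the domain hypotheses of the statement are there to license.

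Concretely, I would argue as follows. First, using the commutation just described together with the hypotheses that $D(\cdot,\eta)\in\caD(\hat L)$ for all $\eta$ and $\varLambda_\ight D(\cdot,\zeta)\in\caD(\hat L)$ for all $\zeta$, so that both sides below are defined,
\[
\hat L_\eft \varLambda_\ight D \;=\; \varLambda_\ight\,\hat L_\eft D\ .
\]
Next, inserting the assumed duality between $\hat L$ and $L$, namely $\hat L_\eft D = L_\ight D$,
\[
\varLambda_\ight\,\hat L_\eft D \;=\; \varLambda_\ight\, L_\ight D\ .
\]
Finally, fix $\xi\in\hat\Omega$ and read both $L_\ight$ and $\varLambda_\ight$ as acting in the $\eta$-variable; applying the intertwining relation $\caL \varLambda f = \varLambda L f$ to $f = D(\xi,\cdot)$, which lies in $\caD(L)$ by hypothesis and whose image $\varLambda f = \varLambda_\ight D(\xi,\cdot)$ then lies in $\caD(\caL)$, gives
\[
\varLambda_\ight\, L_\ight D(\xi,\cdot) \;=\; \varLambda\bigl(L\,D(\xi,\cdot)\bigr) \;=\; \caL\bigl(\varLambda\,D(\xi,\cdot)\bigr) \;=\; \caL_\ight \varLambda_\ight D(\xi,\cdot)\ .
\]
Concatenating the three displays yields $\hat L_\eft \varLambda_\ight D = \caL_\ight \varLambda_\ight D$, which is the claim.

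I do not anticipate any genuine obstacle: the only thing demanding care is that each intermediate function lie in the relevant domain so that the operators may be applied and the order of $\hat L_\eft$ and $\varLambda_\ight$ swapped, and the five domain conditions in the statement ($D(\xi,\cdot)\in\caD(L)$, $D(\cdot,\eta)\in\caD(\hat L)$, $\varLambda f\in\caD(\caL)$ for $f\in\caD(L)$, $\varLambda_\ight D(\xi,\cdot)\in\caD(\caL)$, $\varLambda_\ight D(\cdot,\zeta)\in\caD(\hat L)$) are tailored precisely to that. As a sanity check one may also run the computation in matrix notation when all state spaces are finite: with $\hat L D = D L^\dagger$ and $\caL\varLambda = \varLambda L$, the object $\varLambda_\ight D$ is $D\varLambda^\dagger$, and $\hat L(D\varLambda^\dagger) = (\hat L D)\varLambda^\dagger = D L^\dagger \varLambda^\dagger = D(\varLambda L)^\dagger = D(\caL\varLambda)^\dagger = (D\varLambda^\dagger)\caL^\dagger$ by associativity alone; but the operator-theoretic version above is the one needed for the generating-function applications in Section~\ref{section generating}, where $\caL$ is a differential operator and $\varLambda$ an integral transform.
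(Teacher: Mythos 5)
Your proof is correct and follows exactly the same three-step chain as the paper's own argument: commute $\hat L_{\eft}$ with $\varLambda_{\ight}$, insert the duality $\hat L_{\eft} D = L_{\ight} D$, then apply the intertwining relation in the right variable. The extra attention you give to the domain hypotheses and the matrix sanity check are fine but do not change the substance of the argument.
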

\begin{proof}
	\beq \nonumber
	\hat L_\eft \varLambda_\ight D\ =\ \varLambda_\ight \hat L_\eft D\ =\ \varLambda_\ight L_{\ight} D\ =\ \caL_\ight \varLambda_\ight D\ .
	\eeq
Here in the first equality we used that left and right actions commute, in the second equality we used the assumed duality of $\hat{L}$ and $L$,
and in the third equality we used the assumed intertwining.
\end{proof}

\subsection{Intertwining between continuum and discrete processes}
In this section we prove the existence of an intertwining relation between the interacting diffusion processes presented in Section \ref{section setting diffusion} and the particle systems of Section \ref{section setting particle}. This intertwining relation provides a second connection, besides the scaling limit procedure (cf.\ Section \ref{section setting diffusion}), between continuum and discrete processes, which proves to be better suited for the goal of establishing duality relations among these processes. Indeed, the characterization of all possible factorized self-dualities for particle systems obtained in Section \ref{section general construction} and the intertwining relation below, via the application of Theorem \ref{theorem intertwin}, produces a characterization of all possible dualities, resp.\ self-dualities, between the discrete and the continuum processes, resp.\ of the continuum process.

\

In the following proposition, we prove the intertwining relation for operators $L^{\sigma,\beta}$ and $\caL^{\sigma,\beta}$ defined, respectively, on functions $f : E^V \to \R$, $E= \N$, as
\beq \label{Lsigmabeta}
L^{\sigma,\beta} f(\eta) &=& \sum_{x,y \in V} p(x,y)\ L^{\sigma,\beta}_{x,y} f(\eta)\ ,\quad \eta \in E^V\ ,
\eeq
where
\beq \nonumber
L^{\sigma,\beta}_{x,y}f(\eta) &=& \eta_x(\beta+\sigma\eta_y) (f(\eta^{x,y})-f(\eta)) + \eta_y (\beta+\sigma \eta_x) (f(\eta^{y,x})-f(\eta))\ ,
\eeq
and, on real analytic functions $f : \R^V \to \R$, as
\beq \label{Lsigmabetadiffusion}
\caL^{\sigma,\beta} f(z) &=& \sum_{x,y \in V} p(x,y)\ \caL^{\sigma,\beta}_{x,y}f(z)\ ,\quad z \in \R^V\ ,
\eeq
where
\beq \nonumber
\caL^{\sigma,\beta}_{x,y}f(z) &=& (-\beta (z_x-z_y)(\partial_x-\partial_y)  +\sigma z_x z_y (\partial_x-\partial_y)^2 )f(z)\ .
\eeq 
Note that $L^{\sigma,\beta}$ in \eqref{Lsigmabeta} is a special instance of the generator $L^{u,v}$ in \eqref{gzrpgen} with conditions \eqref{didi}, while $\caL^{\sigma,\beta}$  above,  matches on a common sub-domain, for particular choices of the parameters $\sigma$ and $\beta$, those in \eqref{generators diffusions}.

\begin{proposition}
	Let $G$ be the Poisson probability kernel defined as the   operator that maps functions $f : \N \to \R$ into functions $G f : \R \to \R$ as
	\beq \label{G single-site intertwiner}
	G f(z) &=& \sum_{n =0}^\infty f(n) \frac{z^n}{n!} e^{-z}\ ,\quad z \in \R\ .
	\eeq
	Then, whenever $G f : \R \to \R$ is a real analytic function, if $G^\otimes= \otimes_{x \in V} G^{(x)}$ denotes the tensorized operator mapping functions $f : \N^V \to \R$ into functions $f : \R^V \to \R$ accordingly, $\caL^{\sigma,\beta}$ and $L^{\sigma,\beta}$ are intertwined with intertwiner $G^\otimes$, namely
	\beq \label{intertwinL}
	\caL^{\sigma,\beta} G^\otimes f(z) &=& G^\otimes L^{\sigma,\beta} f(z)\ ,\quad z \in \mathbb \R^V\ .
	\eeq
\end{proposition}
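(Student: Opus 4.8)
The plan is to reduce \eqref{intertwinL} to a single-edge, two-variable computation and then carry that out by transporting the discrete ``multiply-and-shift'' operators to the continuum through the generating function $\caG$ of the introductory example. First, by linearity, since $L^{\sigma,\beta}=\sum_{x,y}p(x,y)L^{\sigma,\beta}_{x,y}$, $\caL^{\sigma,\beta}=\sum_{x,y}p(x,y)\caL^{\sigma,\beta}_{x,y}$ and $G^\otimes=\bigotimes_{x\in V}G^{(x)}$, it suffices to prove $\caL^{\sigma,\beta}_{x,y}G^\otimes f=G^\otimes L^{\sigma,\beta}_{x,y}f$ for each fixed ordered pair $(x,y)$. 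Both $L^{\sigma,\beta}_{x,y}$ and $\caL^{\sigma,\beta}_{x,y}$ act only on the coordinates $x,y$, while $\bigotimes_{z\neq x,y}G^{(z)}$ acts only on the remaining coordinates, and operators on disjoint coordinates commute; so the edge identity reduces to the two-variable statement that the single-edge operators, viewed as operators in the variables $(n_1,n_2)$ resp.\ $(z_1,z_2)$, are intertwined by $G^{(1)}\otimes G^{(2)}$, i.e.\ it is enough to take $V=\{1,2\}$.

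For the two-variable case I would write $\tilde L=(\beta\hat n_1+\sigma\hat n_1\hat n_2)(\pi^-_1\pi^+_2-I)+(\beta\hat n_2+\sigma\hat n_1\hat n_2)(\pi^+_1\pi^-_2-I)$, where $(\hat n f)(n)=nf(n)$ and $(\pi^\pm f)(n)=f(n\pm1)$ (with $f(-1):=0$); the two summands are exchanged by $1\leftrightarrow 2$, so only the first has to be handled. It is convenient to factor $G^\otimes f=w\cdot\caG^\otimes f$, where $w(z)=e^{-\sum_x z_x}$, $\caG f(z)=\sum_{n\ge0}f(n)z^n/n!$ is the plain generating function and $\caG^\otimes=\bigotimes_x\caG^{(x)}$; under the hypothesis that $G^\otimes f$ is real-analytic, so is $\caG^\otimes f=w^{-1}G^\otimes f$, which justifies differentiating the series termwise. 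A short series manipulation yields the single-site identities $\caG\hat n=(z\partial_z)\caG$, $\caG\pi^+=\partial_z\caG$ and $\caG(\hat n\pi^-)=z\,\caG$; here the combination $\hat n\pi^-$ (not $\pi^-\hat n$) is the relevant one since $(\hat n\pi^- f)(n)=nf(n-1)$, and it is exactly this combination that arises because the jump rate is evaluated at the \emph{departure} configuration. Applying $\caG^\otimes=\caG_1\caG_2$ coordinatewise (so $\caG_1(\hat n_1\pi^-_1)=z_1\caG_1$, $\caG_2(\hat n_2\pi^+_2)=z_2\partial_2^2\caG_2$, $\caG_j\hat n_j=z_j\partial_j\caG_j$, and so on) one computes that the first summand contributes $\bigl[-\beta z_1(\partial_1-\partial_2)-\sigma z_1z_2\partial_2(\partial_1-\partial_2)\bigr]\caG^\otimes f$ and the second (its $1\leftrightarrow2$ image) $\bigl[\beta z_2(\partial_1-\partial_2)+\sigma z_1z_2\partial_1(\partial_1-\partial_2)\bigr]\caG^\otimes f$; adding, everything collapses to $\caG^\otimes\tilde L f=\bigl[-\beta(z_1-z_2)(\partial_1-\partial_2)+\sigma z_1z_2(\partial_1-\partial_2)^2\bigr]\caG^\otimes f=\tilde\caL\,\caG^\otimes f$.

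It remains to put back the factor $w$. Each single-edge operator $\caL^{\sigma,\beta}_{x,y}$ is built only from multiplication by $z_x,z_y$ and from $\partial_x-\partial_y$; since $w$ depends only on $\sum_z z_z$ one has $(\partial_x-\partial_y)w=0$, hence $(\partial_x-\partial_y)(w\Psi)=w(\partial_x-\partial_y)\Psi$ and, iterating, $(\partial_x-\partial_y)^2(w\Psi)=w(\partial_x-\partial_y)^2\Psi$, while multiplication by $z_x$ or $z_y$ commutes with multiplication by $w$; thus $\caL^{\sigma,\beta}_{x,y}$, and therefore $\caL^{\sigma,\beta}$, commutes with multiplication by $w$. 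Combining this with the edge computation and the reduction of the first step gives $\caL^{\sigma,\beta}G^\otimes f=\caL^{\sigma,\beta}(w\,\caG^\otimes f)=w\,\caL^{\sigma,\beta}\caG^\otimes f=w\,\caG^\otimes L^{\sigma,\beta}f=G^\otimes L^{\sigma,\beta}f$, which is \eqref{intertwinL}.

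I do not expect a conceptual difficulty: the whole argument is the operator bookkeeping in the middle paragraph, where care is needed with the non-commutativity of the $\hat n_j$ and the shifts $\pi^\pm_j$, with the operator ordering forced by the rates depending on the departure site, and with justifying (via the assumed real-analyticity) the termwise differentiation of the series. Alternatively one can bypass the factor $w$ altogether and instead establish the identities directly for $G$, namely $G\hat n=(z\partial_z+z)G$, $G\pi^+=(\partial_z+1)G$ and $G(\hat n\pi^-)=z\,G$, and rerun the same edge computation; going through $\caG$ and multiplication by $w$ is merely a shortcut exploiting that $\caL^{\sigma,\beta}$ only ever involves the combination $\partial_x-\partial_y$.
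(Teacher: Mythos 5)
Your proof is correct and follows essentially the same route as the paper's: reduce to a single edge via the factorized structure, establish the intertwining for the non-normalized generating function $\bar G$ (your $\caG$) using exactly the four single-site identities the paper lists, and then restore the factor $e^{-|z|}$ by noting that $\caL^{\sigma,\beta}$ involves only $\partial_x-\partial_y$ and hence commutes with multiplication by it. Your middle paragraph merely makes explicit the edge computation that the paper leaves to the reader.
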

\begin{proof}Let us introduce the non-normalized operator
\beq \nonumber
\bar G f(z) &=& \sum_{n=0}^\infty f(n) \frac{z^n}{n!}\ ,\quad z \in \R\ ,
\eeq
and the associated tensorized operator $\bar G^\otimes =\otimes_{x \in V} \bar G^{(x)}$.
	Due to the factorized structure of $L^{\sigma,\beta}$, $\caL^{\sigma,\beta}$ and $\bar G^\otimes$, the proof of the intertwining relation \eqref{intertwinL} with  $\bar G^\otimes$ as an intertwiner reduces to consider and combine the following relations:
	\beq \nonumber
	\sum_{n=0}^\infty n f(n-1)
	\frac{z^n}{n!} &=& z \bar Gf(z)\\
	\nonumber
	\sum_{n=0}^\infty f(n+1) \frac{z^n}{n!}  &=&  \frac{d}{dz} \bar Gf(z)\\
	\nonumber
	\sum_{n=0}^\infty n  f(n) \frac{z^n}{n!} &=& z \frac{d}{dz} \bar Gf(z)\\
	\nonumber
	\sum_{n=0}^\infty n f(n+1) \frac{z^n}{n!} &=& z \frac{d^2}{dz^2} \bar Gf(z)\ .
	\eeq
	As a first consequence, we have
	\beq \nonumber
	\caL^{\sigma,\beta} \bar G^\otimes  f(z) &=&  \bar G^\otimes  L^{\sigma,\beta} f(z)\ ,\quad z \in \R^V\ .
	\eeq
	We obtain \eqref{intertwinL} by observing  that ($|z|=\sum_{x \in V} z_x$)
	\beq \nonumber
	G^\otimes f(z) &=& e^{-|z|}\ \bar G^\otimes f(z)\ ,\quad z \in \R^V\  ,
	\eeq
	and that, for $g(z) = \bar g(z)\cdot e^{-|z|}$,
	\beq \nonumber
	\caL^{\sigma,\beta} g(z) &=& e^{-|z|}\ \caL^{\sigma,\beta} \bar g(z)\ ,\quad z \in \R^V\ .
	\eeq
\end{proof}
We note that the intertwiner $G^\otimes$ has a nice probabilistic interpretation: from an \textquotedblleft energy\textquotedblright\ configuration $z \in \R_+^V$, the associated particle configurations are generated by placing a number of particles on each site $x \in V$, independently, and distributed according to a Poisson random variable with intensity $z_x$.

In the remaining part of this section, under some reasonable regularity assumptions, we are able to invert the intertwining relation \eqref{intertwinL}, namely to find an operator $H^\otimes=\otimes_{x \in V} H^{(x)}$ that intertwines $L^{\sigma,\beta}$ and $\caL^{\sigma,\beta}$, in this order. The natural candidate for $H$ is the \textquotedblleft inverse operator\textquotedblright\ of $G$, whenever this is well-defined. In general, this \textquotedblleft inverse intertwiner\textquotedblright\ lacks any probabilistic interpretation, but indeed establishes a second intertwining relation useful in the next section.

\begin{proposition}\label{proposition inverse intertwining}
	Let $H$ be the differential operator mapping  real analytic  functions $g : \R \to \R$ into functions $H g : \N \to \R$ as
	\beq \label{H single-site inverse intertwiner}
	H g(n) &=& \left(\left[\frac{d^n}{d z^n} \right]_{z=0} e^z g(z) \right)\ ,\quad n \in \N\ .
	\eeq
	Then $H$ is the inverse operator of $G$, namely, for all $f : \N \to \R$ such that $G f : \R \to \R$ is real analytic, we have
	\beq \nonumber
	G H g (z)\ =\ g(z)\ ,\quad z \in \R\ ,\quad H G f(n)\ =\ f(n)\ ,\quad n \in \N\ .
	\eeq
	Moreover, the tensorized operator $H^\otimes=\otimes_{x \in V} H^{(x)}$ is an intertwiner for $L^{\sigma, \beta}$ and $\caL^{\sigma,\beta}$, i.e.\ for all real analytic $g : \R^V \to \R$,
	\beq \label{intertwinLinv}
	L^{\sigma,\beta} H^\otimes g(n) &=& H^\otimes \caL^{\sigma,\beta} g(n)\ ,\quad n \in \N\ .
	\eeq
\end{proposition}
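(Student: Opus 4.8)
The plan is to prove the two assertions in turn: first that $H$ is a genuine two-sided inverse of $G$, and then to deduce the intertwining \eqref{intertwinLinv} by conjugating the already established relation \eqref{intertwinL} with the pair $(G^\otimes,H^\otimes)$, using that $\caL^{\sigma,\beta}$ preserves real analyticity. No substantive new computation is needed; the work is bookkeeping of the regularity hypotheses.

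First I would recognize both $G$ and $H$ through the non-normalized generating function $\bar G$ of the previous proof. Since $Gf(z)=e^{-z}\bar Gf(z)$ with $\bar Gf(z)=\sum_n f(n)z^n/n!$, multiplying by $e^z$ gives $e^z Gf(z)=\bar Gf(z)$, whose $n$-th Taylor coefficient at $z=0$ is precisely $f(n)$; by definition of $H$ this says $HGf(n)=f(n)$, valid whenever $Gf$ is real analytic so that the power-series identification is legitimate. Conversely, for real analytic $g$ the function $e^z g(z)$ is real analytic with Taylor expansion $e^z g(z)=\sum_n Hg(n)\,z^n/n!=\bar G(Hg)(z)$, so $G(Hg)(z)=e^{-z}\bar G(Hg)(z)=e^{-z}e^z g(z)=g(z)$. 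Applying these two identities site by site yields the tensorized statements $H^\otimes G^\otimes=\mathrm{id}$ and $G^\otimes H^\otimes=\mathrm{id}$ on the corresponding classes of jointly real analytic functions.

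Next, for the intertwining, I would take a real analytic $g:\R^V\to\R$ and apply the previous proposition, i.e.\ relation \eqref{intertwinL}, to the function $f=H^\otimes g$; this is admissible because $G^\otimes f=G^\otimes H^\otimes g=g$ is real analytic by hypothesis. This gives $\caL^{\sigma,\beta}G^\otimes H^\otimes g=G^\otimes L^{\sigma,\beta}H^\otimes g$, that is, $\caL^{\sigma,\beta}g=G^\otimes L^{\sigma,\beta}H^\otimes g$. Applying $H^\otimes$ to both sides and using $H^\otimes G^\otimes=\mathrm{id}$ on the right-hand side then produces $H^\otimes\caL^{\sigma,\beta}g=L^{\sigma,\beta}H^\otimes g$, which is exactly \eqref{intertwinLinv}.

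The step that requires care — and which I expect to be the only real obstacle — is justifying the final use of $H^\otimes G^\otimes=\mathrm{id}$ on the function $L^{\sigma,\beta}H^\otimes g$: this needs $G^\otimes\!\left(L^{\sigma,\beta}H^\otimes g\right)$ to be real analytic, and the displayed identity shows that this function equals $\caL^{\sigma,\beta}g$. Since $\caL^{\sigma,\beta}$ is a second-order differential operator with polynomial coefficients, it maps real analytic functions to real analytic functions, so $\caL^{\sigma,\beta}g$ is real analytic and the inversion identity applies. One should similarly record that $H^\otimes g$ lies in the domain on which \eqref{intertwinL} was proved, which again reduces to $G^\otimes H^\otimes g=g$ being real analytic. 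Apart from threading these analyticity hypotheses through, the argument is purely formal; alternatively one could redo it directly by establishing, for $\bar H g(n)=\big[\tfrac{d^n}{dz^n}\big]_{z=0}g(z)$, the four generating-function identities dual to those used for $\bar G$ in the previous proof, but the conjugation argument above is shorter.
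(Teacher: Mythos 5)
Your proof is correct, but the route you take for the intertwining \eqref{intertwinLinv} is genuinely different from the paper's. The paper argues directly: it first establishes four generating-function identities for the non-normalized operator $\bar H g(n)=\left(\left[\tfrac{d^n}{dz^n}\right]_{z=0}g(z)\right)$ (the duals of the four used for $\bar G$), deduces $L^{\sigma,\beta}\bar H^\otimes=\bar H^\otimes\caL^{\sigma,\beta}$, and then writes $Hg(n)=\sum_{k=0}^n\binom{n}{k}\bar Hg(k)=A\bar Hg(n)$, invoking a separate lemma that $A^\otimes=e^{\sum_{x\in V}J^{(x)}}$ is a symmetry of $L^{\sigma,\beta}$ to conclude $L^{\sigma,\beta}H^\otimes=A^\otimes L^{\sigma,\beta}\bar H^\otimes=A^\otimes\bar H^\otimes\caL^{\sigma,\beta}=H^\otimes\caL^{\sigma,\beta}$. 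You instead conjugate the already-proved relation \eqref{intertwinL} with the pair $(G^\otimes,H^\otimes)$. Your approach is shorter, needs no symmetry lemma and no dual identities, and has the additional merit of actually proving the inversion statements $GHg=g$ and $HGf=f$, which the paper asserts in the proposition but never verifies in its written proof; you also correctly isolate the one delicate step, namely that cancelling $H^\otimes G^\otimes$ against $L^{\sigma,\beta}H^\otimes g$ requires $G^\otimes L^{\sigma,\beta}H^\otimes g=\caL^{\sigma,\beta}g$ to be real analytic, which holds since $\caL^{\sigma,\beta}$ has polynomial coefficients. What the paper's longer computation buys is the explicit algebraic decomposition $H^\otimes=A^\otimes\bar H^\otimes$ and the identification of $A^\otimes$ as a symmetry, which ties into the symmetry-based duality machinery of \cite{gkrv}; your formal argument hides this structure but is logically complete at the same level of rigor as the paper (both treat ``real analytic'' as entailing globally convergent Taylor expansions).
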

Before giving the proof, we need the following lemma.

\begin{lemma}\label{lemma symmetry}
Let $A$ be the operator acting on functions $f : \N \to \R$ defined as
 \beq \nonumber
A f(n) &=& \sum_{k=0}^n \binom{n}{k} f(k)\ ,\quad n \in \N\ .
\eeq
Then, the tensorized operator $A^\otimes = \otimes_{x \in V} A^{(x)}$ is a symmetry for the generator $L^{\sigma,\beta}$, i.e.\ for all $f\ : \N^V \to \R$
\beq \label{symmetry}
A^\otimes L^{\sigma, \beta} f(n) &=& L^{\sigma,\beta} A^\otimes f(n)\ ,\quad n \in \N^V\ .
\eeq
\end{lemma}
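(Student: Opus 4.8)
Since both $L^{\sigma,\beta}$ and $A^\otimes$ factorize over edges and over sites respectively, the identity \eqref{symmetry} reduces to a single-edge computation: it suffices to show that $A^{(x)}\otimes A^{(y)}$ commutes with $L^{\sigma,\beta}_{x,y}$, the single-edge generator acting on a function of the two variables $(\eta_x,\eta_y)$. Indeed, $A^\otimes = \bigotimes_z A^{(z)}$ commutes with $A^{(x)}\otimes A^{(y)}$ trivially for the other tensor factors, and $L^{\sigma,\beta}=\sum_{x,y}p(x,y)L^{\sigma,\beta}_{x,y}$ acts only on the $x,y$ coordinates of each summand, so the full statement follows by linearity once the two-variable commutation is established. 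So from now on I would work with $n=(n_1,n_2)\in\N^2$ and the operator $B=A\otimes A$, $Bf(n_1,n_2)=\sum_{k_1\le n_1}\sum_{k_2\le n_2}\binom{n_1}{k_1}\binom{n_2}{k_2}f(k_1,k_2)$, and the single-edge generator
\[
L_{12}f(n_1,n_2)=n_1(\beta+\sigma n_2)(f(n_1-1,n_2+1)-f(n_1,n_2))+n_2(\beta+\sigma n_1)(f(n_1+1,n_2-1)-f(n_1,n_2)).
\]
I want $B L_{12}f = L_{12} B f$.

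**How to compute.** The cleanest route is through generating functions, exploiting the intertwining already proved in the Proposition preceding the lemma. The operator $B$ is closely tied to the binomial transform, whose generating-function incarnation is simply a shift: if $\bar G^\otimes$ denotes the (non-normalized) two-site generating function $\bar G^\otimes f(z_1,z_2)=\sum f(n_1,n_2)\frac{z_1^{n_1}}{n_1!}\frac{z_2^{n_2}}{n_2!}$, then one checks directly from the binomial theorem that
\[
\bar G^\otimes (A\otimes A) f(z_1,z_2) = e^{z_1+z_2}\,\bar G^\otimes f(z_1,z_2),
\]
i.e. conjugating $B$ by $\bar G^\otimes$ turns it into multiplication by $e^{z_1+z_2}=e^{|z|}$. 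On the other hand the preceding Proposition gives $\caL^{\sigma,\beta}\bar G^\otimes = \bar G^\otimes L^{\sigma,\beta}$, so $\bar G^\otimes$ conjugates $L^{\sigma,\beta}$ into $\caL^{\sigma,\beta}$. Hence, after conjugation, the desired identity $BL^{\sigma,\beta}=L^{\sigma,\beta}B$ becomes the statement that \emph{multiplication by $e^{|z|}$ commutes with $\caL^{\sigma,\beta}$} (on the single edge, with $\caL^{\sigma,\beta}_{12}=-\beta(z_1-z_2)(\partial_1-\partial_2)+\sigma z_1z_2(\partial_1-\partial_2)^2$). This is a short explicit differential-operator check: both the first-order part $(z_1-z_2)(\partial_1-\partial_2)$ and the second-order part $z_1z_2(\partial_1-\partial_2)^2$ annihilate the function $z_1+z_2$ and are built from the combination $\partial_1-\partial_2$, while $e^{|z|}=e^{z_1+z_2}$ is killed by $\partial_1-\partial_2$; so conjugating each of them by $e^{z_1+z_2}$ leaves them unchanged. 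Running the computation backwards — invert $\bar G^\otimes$ using the operator $H$ from Proposition \ref{proposition inverse intertwining} (valid on the class of $f$ for which $\bar G^\otimes f$ is real analytic, which includes all finitely supported $f$, and then extend by the density of such $f$ or by a direct combinatorial identity) — yields $BL^{\sigma,\beta}=L^{\sigma,\beta}B$ on a single edge, hence \eqref{symmetry}.

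**Alternative / the main obstacle.** If one prefers to avoid generating-function bookkeeping and the attendant analyticity caveats, one can verify $BL_{12}=L_{12}B$ by a direct combinatorial computation: expand both sides on a test function $f$, use the Vandermonde-type identities $\binom{n_1}{k_1}n_1 = n_1\binom{n_1-1}{k_1}+\ldots$ relating $\binom{n_1}{k_1}$, $\binom{n_1\pm1}{k_1}$ and $\binom{n_1}{k_1\pm1}$, and match coefficients of $f(k_1,k_2)$ on the two sides. The bookkeeping is routine but genuinely intricate because of the cross-term $\sigma n_1 n_2$, which couples the two binomial sums; this is the step I expect to be the real work, and it is precisely why the generating-function detour is attractive — it replaces the combinatorial identity by the trivial observation that $(\partial_1-\partial_2)e^{z_1+z_2}=0$. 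Either way, the only subtlety to flag is domain/convergence: the generating-function argument needs $\bar G^\otimes f$ to be real analytic so that $H$ inverts $\bar G^\otimes$, so I would state the lemma's proof first for finitely supported $f$ and then note that \eqref{symmetry}, being an identity between polynomial expressions in the values of $f$ at finitely many points, extends to all $f:\N^V\to\R$.
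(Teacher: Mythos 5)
Your proof is correct, but it takes a genuinely different route from the paper's. The paper's argument is purely algebraic and very short: it writes $A=e^{J}$ where $Jf(n)=nf(n-1)$ (locally nilpotent, so the exponential is a finite sum at each $n$), observes that the operators $J^{(x)}$ commute over sites so that $A^\otimes=e^{\sum_{x\in V}J^{(x)}}$, and then invokes the known fact (citing \cite{gkrv}, \cite{Transport}) that $\sum_{x\in V}J^{(x)}$ is a symmetry of $L^{\sigma,\beta}$; the exponential of a symmetry is again a symmetry. Your route instead pushes everything through the generating function: the elementary identity $\bar G A=e^{z}\bar G$, the forward intertwining $\caL^{\sigma,\beta}\bar G^\otimes=\bar G^\otimes L^{\sigma,\beta}$ already established in the preceding proposition, and the observation that multiplication by $e^{|z|}$ commutes with $\caL^{\sigma,\beta}$ because that operator is built from $\partial_x-\partial_y$ (this last point is exactly the remark $\caL^{\sigma,\beta}(\bar g\, e^{-|z|})=e^{-|z|}\caL^{\sigma,\beta}\bar g$ already used in that proposition's proof, with the opposite sign in the exponent). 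Both arguments are valid. The paper's buys brevity and exhibits $A^\otimes$ as the exponential of a Lie-algebra symmetry, at the price of relying on an external result; yours is self-contained within the machinery of this section and correctly handles the only delicate points: the reduction to finitely supported $f$ with extension by the pointwise finiteness of both sides of \eqref{symmetry}, and the fact that you use $\bar H$ only as the Taylor-coefficient inverse of $\bar G$ rather than the inverse intertwining of Proposition \ref{proposition inverse intertwining} (whose proof depends on this very lemma), so no circularity arises. Your fallback direct combinatorial verification would also work but is the least economical of the three options.
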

\begin{proof}
	Instead of going through tedious computations, we  exploit the fact that the operator $A^\otimes$ has the form
	\beq \nonumber
	A^\otimes &=& \otimes_{x \in V}\ A^{(x)}\ =\ \otimes_{x \in V}\ e^{J^{(x)}}\ =\ \otimes_{x \in V}\ \sum_{k=0}^\infty \frac{(J^{(x)})^k}{k!}\ ,
	\eeq
	where $J^{(x)}$ is an operator defined for functions $f : \N^V \to \R$ which acts only on the $x$-th variable as
	\beq \nonumber
	J^{(x)} f(n) &=& n_x\  f(n-\delta_x)\ ,\quad n \in \N^V\ .
	\eeq
	Since all these operators $\{J^{(x)},\ x \in V \}$ commute over the sites we have
	\beq \nonumber
	A^\otimes &=& \otimes_{x \in V} e^{J^{(x)}}\ =\ e^{\sum_{x \in V} J^{(x)}}\ .
	\eeq
	We conclude the proof by noting that the operator
	\beq \nonumber
	\sum_{x \in V} J^{(x)}
	\eeq
	is a symmetry for the generator $L^{\sigma, \beta}$, cf.\ e.g.\  \cite{gkrv}, \cite{Transport}.
\end{proof}

\begin{proof}[Proposition \ref{proposition inverse intertwining}]
	First we compute the following key relations:
	\beq \nonumber
	\left(\left[\frac{d^n}{dz^n} \right]_{z=0} g'(z)\right) &=& \left(\left[\frac{d^{n+1}}{dz^{n+1}} \right]_{z=0} g(z)\right)\\
	\nonumber
	\left(\left[\frac{d^n}{dz^n} \right]_{z=0} z g(z)\right) &=& n \left(\left[\frac{d^{n-1}}{dz^{n-1}} \right]_{z=0} g(z)\right)\\
	\nonumber
	\left(\left[\frac{d^n}{dz^n} \right]_{z=0} z g'(z)  \right) &=& n \left( \left[\frac{d^n}{dz^n} \right]_{z=0} g(z)\right)\\
	\nonumber
	\left(\left[\frac{d^n}{dz^n} \right]_{z=0} z g''(z) \right) &=& n \left(\left[\frac{d^{n+1}}{dz^{n+1}} \right]_{z=0} g(z) \right)\ .
	\eeq
	Hence, if we introduce the operator
	\beq \nonumber
	\bar H g(n) &=& \left(\left[\frac{d^n}{d z^n} \right]_{z=0} g(z) \right)\ ,\quad n \in \N\ ,
	\eeq
	and the associated tensorized operator $\bar H^\otimes = \otimes_{x \in V} \bar H^{(x)}$, we obtain
	\beq \label{Hbar intertwining}
	L^{\sigma,\beta} \bar H^\otimes g(n) &=& \bar H^\otimes \caL^{\sigma,\beta} g(n)\ ,\quad n \in \N\ .	
	\eeq
	Now, by using Lemma \ref{lemma symmetry} and  noting that
	\beq \nonumber
	H g(n) &=& \sum_{k=0}^n \binom{n}{k} \bar H g(k)\ =\  A \bar H g(n)\ ,\quad n \in \N\ ,
	\eeq
	and, by the mixed property of the tensor product, 
	\beq \label{HvsHbar}
	H^\otimes g(n) &=& (A \bar H)^\otimes g(n)\ =\ A^\otimes \bar H^\otimes g(n)\ ,\quad n \in \N^V\ ,
	\eeq
	we get \eqref{intertwinLinv}  by applying first \eqref{HvsHbar}, then \eqref{symmetry} and finally \eqref{Hbar intertwining}:
	\beq \nonumber
	L^{\sigma,\beta} H^\otimes g\ =\ L^{\sigma,\beta} A^\otimes \bar H^\otimes g\ =\ A^\otimes L^{\sigma,\beta} \bar H^\otimes g\ =\ A^\otimes \bar H^\otimes \caL^{\sigma,\beta} g\ =\ H^\otimes \caL^{\sigma,\beta} g\ .
	\eeq
	\end{proof}

\subsection{Generating functions and duality}
As anticipated in the previous section, from the intertwining relation \eqref{intertwinL} and the functions obtained in Section \ref{section particle systems duality}, in what follows we find explicitly new duality relations.

Due to the factorized form \eqref{factoo} of the self-duality functions with single-site functions \eqref{single-site irw} and \eqref{dknhyper} and the tensor form of the intertwiner $G^\otimes$ in \eqref{intertwinL}, the new functions inherit the same factorized form. Moreover, from the definition of $G$ in \eqref{G single-site intertwiner}, the whole computation reduces to determine \emph{(exponential) generating functions} of \eqref{single-site irw} and \eqref{dknhyper}.  To this purpose, some identities for hypergeometric functions are available, cf.\ e.g.\  the tables in \cite[Chapter 9]{koekoek}. Moreover, all  generating functions obtained satisfy the requirements of analyticity for suitable choices of the parameters $\sigma$, $\beta$, $a$ and $b$ (cf.\ \cite{koekoek}), hence all operations below make sense.

However, just as the functions found in Section \ref{section particle systems duality}, the functions here obtained will only be \textquotedblleft candidate\textquotedblright\ (self-)duality functions, since no duality relation as in \eqref{dualrel} has been proved, yet. By using the \textquotedblleft inverse\textquotedblright\  intertwining \eqref{intertwinLinv}, all these \textquotedblleft possible\textquotedblright\ dualities turn out to be equivalent, i.e.\ one implies all the others. Thus, in Proposition \ref{proposition eee} below,  we choose to prove directly the self-duality relation for the continuum process, more immediate to verify due to the simpler form of the self-duality functions. Indeed, while the single-site self-duality functions for the $\SIP(\alpha)$ process, for instance,  have the generic form of an hypergeometric function
\beq \nonumber
\pFq{2}{1}{-k,-n}{\alpha}{\frac{b}{a}\alpha}\ ,\quad k, n \in \N\ ,
\eeq
the single-site duality functions between discrete and continuum processes involve in their expressions hypergeometric functions
\beq \nonumber
\pFq{1}{1}{-k}{\alpha}{-\frac{b}{a}\alpha z}\ ,\quad k \in \N\ ,\ z \in \R_+\ ,
\eeq
and those for the self-duality of continuum processes are even simpler, namely
\beq \nonumber
\pFq{0}{1}{-}{\alpha}{\frac{b}{a}\alpha v z}\ ,\quad v, z \in \R_+\ ,
\eeq
as the number of arguments of the hypergeometric function drops.

\medskip

The tables below schematically report all single-site (self)-duality functions for the operators $L^{\sigma,\beta}$ in \eqref{Lsigmabeta} and  $\caL^{\sigma,\beta}$ in \eqref{Lsigmabetadiffusion}.  Recall that the parameters $a$, $b \in \R$ in \eqref{didi} are properly chosen (cf.\ Section \ref{section particle systems duality}).

\

\begin{itemize*}
	\item[(I)] \textbf{Independent random walkers ($\IRW$)\ , $\sigma=0\ $, $\beta=1\ $.}
\end{itemize*}

\medskip

\begin{center}
	\begin{tabular}{c||c|c|c}
		\hline
		\hline
		&&&\\
		Classical polynomials & $\frac{n!}{(n-k)!} b^k\1\{k \leq n\}$ &$ \left(bz\right)^k$&$e^{- v} e^{ b v z}$\\
		&&&\\
		\hline
		\hline
		&&&\\
		Orthogonal polynomials &$a^k C_k(n;-\frac{a}{b})$& $  \left(a + b z \right)^k$&$e^{(a-1)v}e^{b v z}$\\
		&&&\\
		\hline
		\hline
		&&&\\
		Cheap duality functions &$e^{\lambda} \frac{k!}{\lambda^k}\1\{k=n \}$&$e^{\lambda-z}\left(\frac{z}{\lambda} \right)^k$&$e^{\lambda-z-v}e^{ \frac{v z}{\lambda}}$\\
		&&&\\
		\hline \hline
	\end{tabular}
\end{center}

\bigskip
\medskip

\begin{itemize*}
	\item[(II)] \textbf{Symmetric inclusion process ($\SIP(\alpha)$)\ , $\sigma=1\ $, $\beta=\alpha> 0\ $.}
\end{itemize*}

\medskip

\begin{center}
	\begin{tabular}{c||c|c|c}
		\hline
		\hline
		&&&\\
		Cl. & $\frac{n!}{(n-k)!} \frac{\Gamma\left(\alpha \right)}{\Gamma\left(\alpha  + k\right)} \left(b \alpha \right)^k \1\{k \leq n\}$ &$ \frac{\Gamma(\alpha)}{\Gamma(\alpha+k)} \left(b \alpha z \right)^k$&$ e^{-v} \pFq{0}{1}{-}{\alpha}{b \alpha v z}$\\
		&&&\\
		\hline
		\hline
		&&&\\
		Or. &$a^k \frac{\Gamma\left(\alpha \right)}{\Gamma\left(\alpha + k \right)} M_k(n;  \frac{a}{a-b\alpha}, \alpha)$& $ a^k \frac{k! \Gamma(\alpha)}{\Gamma(\alpha+k)} L_k(z; \alpha-1, -\frac{b}{a}\alpha)$&$e^{(a-1) v}  \pFq{0}{1}{-}{\alpha}{b\alpha v z}$\\
		&&&\\
		\hline
		\hline
		&&&\\
		Ch. &$(1-\lambda)^\alpha\frac{k!}{\lambda^k} \frac{\Gamma(\alpha)}{\Gamma(\alpha+k)} \1\{k=n \}$&$(1-\lambda)^\alpha e^{-z} \left(\frac{z}{\lambda}\right)^k \frac{\Gamma(\alpha)}{\Gamma(\alpha+k)}$&$ (1-\lambda)^\alpha e^{-z-v}\pFq{0}{1}{-}{\alpha}{\frac{v z}{\lambda}}$\\
		&&&\\
		\hline \hline
	\end{tabular}
\end{center}

\bigskip
\medskip

\begin{itemize*}
	\item[(III)] \textbf{Symmetric exclusion process ($\SEP(\gamma)$)\ ,\ $\sigma=-1\ $,\ $\beta=\gamma \in \N\ $.}
\end{itemize*}

\medskip

\begin{center}
	\begin{tabular}{c||c|c|c}
		\hline
		\hline
		&&&\\
		Cl. & $\tfrac{(\gamma-k)!}{\gamma!}\tfrac{n!}{(n-k)!}(b\gamma)^k \1\{k \leq n\}$ &$ \tfrac{(\gamma-k)!}{\gamma!} \left(b \gamma z \right)^k$&$ e^{-v} \pFq{0}{1}{-}{-\gamma}{-b \gamma v z}$\\
		&&&\\
		\hline
		\hline
		&&&\\
		Or. &$a^k K_k(n; -\tfrac{a}{b\gamma}, \seppar ) \left( \frac{b\gamma}{a} \right)^k \frac{1}{\binom{\seppar}{k}}$& $ a^k \pFq{1}{1}{-k}{-\gamma}{\frac{b}{a} \gamma z}$&$e^{(a-1) v}  \pFq{0}{1}{-}{-\gamma}{-b\gamma v z}$\\
		&&&\\
		\hline
		\hline
		&&&\\
		Ch. &$(1+\lambda)^{-\gamma}\frac{k!}{\lambda^k} \tfrac{\gamma!}{(\gamma-k)!} \1\{k=n\}$&$(1+\lambda)^{-\gamma}e^{-z}\left(\frac{z}{\lambda}\right)^k \tfrac{\gamma!}{(\gamma-k)!}$&$e^{-z-v} \left( \frac{\lambda+v z}{\lambda(1+\lambda)}\right)^{\gamma}$\\
		&&&\\
		\hline \hline
	\end{tabular}
\end{center}

\medskip

 More in detail, on the left-most column we place the single-site self-duality functions $d(k,n)$ for the particle systems of Section \ref{section setting particle}: while the top-left functions are those already appearing in \cite{gkrv}, \cite{Transport}, cf.\  also Section \ref{section setting particle} and \eqref{dknclassical} - and, thus, for this reason denoted here as the \textquotedblleft classical\textquotedblright\ ones - the second-to-the-top functions are those derived in Section \ref{section particle systems duality} in  \eqref{single-site irw}--\eqref{dknhyper} and being related to suitable families of orthogonal polynomials. While these two classes of single-site self-duality functions satisfy condition \eqref{Da} (they are the only ones doing so), the bottom-left single-site self-duality functions correspond to the \textquotedblleft cheap\textquotedblright\ self-duality (cf.\ end of Section \ref{section setting lattice}), namely the detailed-balance condition w.r.t.\ the measures $\{\otimes_{x \in V} \nu_\lambda,\ \lambda > 0 \}$ with marginals \eqref{marginals}.

On the mid-column, we find the single-site duality functions between the difference operators $L^{\sigma,\beta}$ and the differential operators $\caL^{\sigma,\beta}$, obtained from their left-neighbors by a direct application of the operator $G$ in \eqref{G single-site intertwiner} on the $n$-variables. The new functions will depend hence on the two variables $k \in \N$ and $z \in \R_+$.

 A second application w.r.t.\ the $k$-variables of the same operator $G$ on the functions just obtained gives us back the right-most column, functions depending now on variables $v, z \in \R_+$. These functions represent the single-site self-duality functions for the differential operator $\caL^{\sigma,\beta}$. As an immediate consequence of Proposition \ref{proposition inverse intertwining}, we could also proceed from right to left by applying the inverse intertwiner $H$ in \eqref{H single-site inverse intertwiner}.

Note that the single-site self-duality functions for $\caL^{\sigma,\beta}$ on the right-most columns, though they have been derived from different discrete analogues, i.e.\ classical, orthogonal and cheap single-site functions, within the same table they differ only for a factor which depends only on the conserved quantities $|z|=\sum_{x \in V}z_x$ and $|v|=\sum_{x \in V}v_x$. Henceforth, when proving the self-duality relation, this extra-factor does not play any role and it is enough to check that the functions
\beq \label{dvz}
d(v,z)\ =\ e^{c v z}\ ,\quad d(v,z)\ =\ \pFq{0}{1}{-}{\alpha}{c v z}\ ,\quad d(v,z)\ =\ \pFq{0}{1}{-}{-\gamma}{c v z}\ ,\quad v, z \in \R_+\ ,
\eeq
for constants $c \in \R$, are single-site self-duality functions for the operators $\caL^{0,1}$, $\caL^{1,\alpha}$ and $\caL^{-1,\gamma}$, respectively. This final computation is the content of the next proposition.

\begin{proposition}\label{proposition eee}
	For any constant $c \in \R$, the functions $d(v,z)$ in \eqref{dvz} are single-site self-duality functions for the differential operators $\caL^{0,1}$, $\caL^{1,\alpha}$ and $\caL^{-1,\gamma}$, respectively.
\end{proposition}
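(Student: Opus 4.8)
The plan is to verify the operator self-duality relation $(\caL^{\sigma,\beta})_{\eft} D = (\caL^{\sigma,\beta})_{\ight} D$ directly for the factorized function $D(v,z)=\prod_{x\in V} d(v_x,z_x)$, where $d(v,z)$ is one of the three functions in \eqref{dvz}. Since $\caL^{\sigma,\beta}=\sum_{x,y}p(x,y)\,\caL^{\sigma,\beta}_{x,y}$, since $D$ factorizes over sites, and since the single-edge operator $\caL^{\sigma,\beta}_{x,y}$ involves only the coordinates $x$ and $y$, it is enough to prove the single-edge identity $(\caL^{\sigma,\beta}_{x,y})_{\eft} D = (\caL^{\sigma,\beta}_{x,y})_{\ight} D$ for one fixed pair $\{x,y\}$; only the two factors $d(v_x,z_x)$ and $d(v_y,z_y)$ enter, so this is a four-variable identity. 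Throughout I abbreviate $d_x=d(v_x,z_x)$, etc.

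The only structural fact I use is that in all three cases $d(v,z)=g(cvz)$ with $g(w)=e^w$, $g(w)={}_0F_1(-;\alpha;w)$, or $g(w)={}_0F_1(-;-\gamma;w)$, and that $g$ solves a simple ODE: $g'=g$ in the exponential case, and the ${}_0F_1$-equation $wg''+bg'-g=0$ (with $b=\alpha$, resp.\ $b=-\gamma$) in the other two. Using $\partial_z d=cv\,g'(cvz)$ and $\partial_z^2 d=c^2v^2g''(cvz)$, these ODEs translate into the ``eigenfunction'' relation
\[
\sigma\, z\,\partial_z^2 d + \beta\,\partial_z d \;=\; \varepsilon\, c\, v\, d\ ,
\]
with $\varepsilon=+1$ for $\IRW$ and $\SIP$ and $\varepsilon=-1$ for $\SEP$, together with its $v\leftrightarrow z$ mirror (valid since each $d$ is symmetric in its two arguments), and the Euler-type identity $z\,\partial_z d = v\,\partial_v d\ (=cvz\,g'(cvz))$.

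Next I expand $(\caL^{\sigma,\beta}_{x,y})_{\ight} D$ with $\partial=\partial_{z_x}-\partial_{z_y}$, getting a sum of about ten terms carrying zero, one, or two $z$-derivatives on $d_x$ and $d_y$, and substitute the eigenfunction relation to eliminate the two second-derivative contributions $z_x\partial_{z_x}^2 d_x$ and $z_y\partial_{z_y}^2 d_y$. After this substitution the first-order cross terms cancel in pairs, and what survives is
\[
\varepsilon\, c\,(v_x z_y + v_y z_x)\, d_x d_y \;-\; \beta\big((z_x\partial_{z_x}d_x)\,d_y + d_x\,(z_y\partial_{z_y}d_y)\big) \;-\; 2\sigma\,(z_x\partial_{z_x}d_x)(z_y\partial_{z_y}d_y)\ .
\]
Rewriting each $z_i\partial_{z_i}d_i$ as $v_i\partial_{v_i}d_i$ via the Euler identity, this expression is invariant under the exchange $(v_x,v_y)\leftrightarrow(z_x,z_y)$; but that exchange is exactly what turns $\caL^{\sigma,\beta}_{x,y}$ acting on the right ($z$-)variables into the same operator acting on the left ($v$-)variables, so $(\caL^{\sigma,\beta}_{x,y})_{\ight} D=(\caL^{\sigma,\beta}_{x,y})_{\eft} D$, and summing over edges finishes the proof. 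The $\IRW$ case $\sigma=0$ is even more transparent, since no second-order terms occur: one finds directly $(\caL^{0,1}_{x,y})_{\ight} D=-c(z_x-z_y)(v_x-v_y)\,d_x d_y$, which is manifestly symmetric in $v$ and $z$.

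No step is deep; the real work is the bookkeeping in the expansion — tracking the terms, checking the pairwise cancellations after the ODE substitution, and confirming that the surviving terms depend on $(v_x,v_y,z_x,z_y)$ only through the symmetric combinations $v_x z_x$, $v_y z_y$, $v_x z_y+v_y z_x$. The one point requiring care is the uniform treatment of the three cases, in particular giving ${}_0F_1(-;-\gamma;\cdot)$ its appropriate meaning in the $\SEP$ setting (where the natural reading is a polynomial object and one may alternatively verify the identity by a finite direct expansion), and checking that the functions in \eqref{dvz} and their products lie in the class of real analytic functions on which $\caL^{\sigma,\beta}$ acts; the latter holds for the relevant ranges of $c$, $\alpha$, $\gamma$, as recorded in the references cited above.
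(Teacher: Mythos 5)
Your proof is correct, and it verifies the same single-edge identity that the paper checks, but it organizes the computation differently in a way that is worth noting. The paper treats the three cases separately: for $\caL^{0,1}$ it uses $\partial_z d = cv\,d$ directly, for $\caL^{1,\alpha}$ it expands \emph{both} sides of the single-edge relation in terms of the contiguous functions $F_x(\alpha+1)$, $F_x(\alpha+2)$ via the derivative formula \eqref{derivative} and then matches them using the recurrence \eqref{recurrence}, and it omits the $\caL^{-1,\gamma}$ case altogether. Your version replaces the contiguous relation by the equivalent ${}_0F_1$ differential equation, packaged as the single eigenfunction identity $\sigma z\partial_z^2 d + \beta\partial_z d = \varepsilon c v\,d$, expands only the right action, and concludes by observing that the reduced expression is invariant under $(v_x,v_y)\leftrightarrow(z_x,z_y)$ --- which is exactly the swap turning the right action into the left action. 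This buys a uniform treatment of all three operators (including the $\SEP$ case the paper leaves to the reader) and halves the bookkeeping, since one never needs to expand the left-hand side; I checked the surviving three-term expression and it is exactly what the substitution produces. Your closing caveats are also well placed: the interpretation of ${}_0F_1(-;-\gamma;\cdot)$ as a terminating (polynomial) object for $\gamma\in\N$ is a point the paper glosses over, and your remark that $d(v,z)=g(cvz)$ is symmetric in its arguments is the one structural fact making the final symmetry argument legitimate.
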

\begin{proof}	
	To prove that
	\beq \nonumber
	d(v,z) &=& e^{c v z}\ ,\quad v, z \in \R_+\ ,
	\eeq
	is a single-site self-duality function for the differential operator $\caL^{0,1}$, we  first observe that $$\partial_{z} d(v,z)= c v\ d(v,z)\ .$$ Hence, the  self-duality relation for the single-edge generator $\caL^{1,0}_{x,y}$ rewrites
	\beq \nonumber
	-(v_x - v_y)(\const z_x - \const z_y) d(v_x, z_x) d(v_y, z_y)
	&=& - (z_x - z_y) (\const v_x -\const v_y) d(v_x, z_x) d(v_y, z_y)\ ,
	\eeq
	which indeed holds.
	
	For the second proof of self-duality for the single-site function
	\beq \nonumber
	d(v,z) &=& \pFq{0}{1}{-}{\alpha}{c v z}\ ,\quad v, z \in \R_+\ ,
	\eeq
	we use the following shortcut: for $x \in V$,
	$$
	F_x(\alpha) = \pFq{0}{1}{-}{\alpha}{c v_x z_x}\ .
	$$
	Additionally, we recall a formula for the $z_x$-derivative of $F_x$, namely
	\beq \label{derivative}
	\frac{\partial}{\partial z_x} F_x(\alpha) &=& \frac{c v_x}{\alpha} F_x(\alpha+1)\ ,
	\eeq
	and a recurrence identity
	\beq\label{recurrence}
	F_x(\alpha+1) &=& F_x(\alpha) - \frac{c v_x z_x}{\alpha (\alpha+1)} F_x(\alpha+2)\ .
	\eeq
	Hence, the self-duality relation  for $\caL^{1,\alpha}_{x,y}$ w.r.t.\ the function $F_x(\alpha) F_y(\alpha)$ rewrites by using \eqref{derivative} as
	\beq \nonumber
	&& c z_x v_y F_x(\alpha+1) F_y(\alpha) + c v_x z_y F_x(\alpha) F_y(\alpha+1) \\
	&+&\nonumber  \frac{c^2 (v_x z_x)(v_y z_x)}{\alpha (\alpha+1)} F_x(\alpha+2) F_y(\alpha)  + \frac{c^2 (v_x z_y)(v_y z_y)}{\alpha (\alpha+1)} F_x(\alpha)F_y(\alpha+2) \\ \nonumber
	&=&c v_x z_y F_x(\alpha+1) F_y(\alpha) + c v_y z_x F_x(\alpha) F_y(\alpha+1)\\
	&+&\nonumber   \frac{c^2 (v_x z_x) (v_x z_y)}{\alpha (\alpha+1)} F_x(\alpha+2) F_y(\alpha)  + \frac{c^2 (z_x v_y)(z_y v_y)}{\alpha (\alpha+1)} F_x(\alpha)F_y(\alpha+2)\ .
	\eeq
	By substituting \eqref{recurrence}, the identity holds.
	
	The proof for the operator $\caL^{-1,\gamma}$ follows the same lines and we omit it.
\end{proof}

\subsubsection*{Acknowledgments}
F.R.\ thanks Cristian Giardin\`{a} and Gioia Carinci for precious discussions; F.S.\ thanks Jan Swart for indicating  the point of view of an intertwining relation.  F.S.\ acknowledges NWO for financial support via the TOP1 grant 613.001.552.

\end{document}